\documentclass[hidelinks,onefignum,onetabnum]{siamart220329}

\usepackage{color}
\definecolor{mygray}{gray}{.75}
\usepackage{xcolor}
\usepackage{paralist}
\usepackage{hyperref}
\usepackage{amssymb}
\usepackage{tikz,pgfplots}
\usepackage{bm}

\usepackage[textsize=tiny]{todonotes}

\usepackage{pgfplotstable}
\usepackage{cite}

\pgfplotsset{compat=newest,compat/show suggested version=false}
\usepackage{cutwin}

\usepackage{dsfont}

\usepackage{algorithm}
\usepackage{algpseudocode}

\usepackage{caption}
\usepackage[font=footnotesize, width=\textwidth]{caption}

\usepackage{subfigure}
\usepackage{wrapfig}

\usepackage{soul}

\usepackage{xspace}

\makeatletter
\@namedef{subjclassname@2020}{%
  \textup{2020} Mathematics Subject Classification}
\makeatother

\newcommand\ts[1]{{\textstyle{#1}}}

\theoremstyle{plain}
\newtheorem{remark}[theorem]{Remark}
\newtheorem{assumption}[theorem]{Assumption}

\newcommand{\cN}{\mathcal{N}}
\newcommand{\cO}{\mathcal{O}}

\newcommand{\cT}{\mathcal{T}}
\newcommand{\cV}{\mathcal{V}}

\newcommand{\bb}{\bm b} 
\newcommand{\bc}{\bm c} 
 
\newcommand{\bg}{\bm g}

\newcommand{\bu}{\bm u}

\newcommand{\bx}{\bm x} 
\newcommand{\by}{\bm y}

\newcommand{\bA}{\bm A}

\newcommand{\bM}{\bm M} 
\newcommand{\bN}{\bm N}

\newcommand{\C}{\mathbb{C}}
\renewcommand{\H}{\mathbb{H}}
\newcommand{\N}{\mathbb{N}}
\newcommand{\R}{\mathbb{R}}
\newcommand{\U}{\mathbb{U}}
\newcommand{\V}{\mathbb{V}}

\newcommand{\bbA}{\pmb{\mathbb{A}}}
\newcommand{\bbB}{\pmb{\mathbb{B}}}

\newcommand{\bbS}{\pmb{\mathbb{S}}}
\newcommand{\bbU}{\pmb{\mathbb{U}}}
\newcommand{\bbV}{\pmb{\mathbb{V}}}
\newcommand{\bbW}{\pmb{\mathbb{W}}}

\newcommand{\Dt}{{\Delta t}}

\newcommand{\MATLAB}{\textsc{Matlab}\xspace}

\ifpdf
\hypersetup{
  pdftitle={Space-Time Form of the Schrödinger Equation},
  pdfauthor={S. Hain and K. Urban}
}
\fi

\makeatletter
\newcommand{\opnorm}{\@ifstar\@opnorms\@opnorm}
\newcommand{\@opnorms}[1]{%
  \left|\mkern-1.5mu\left|\mkern-1.5mu\left|
   #1
  \right|\mkern-1.5mu\right|\mkern-1.5mu\right|
}
\newcommand{\@opnorm}[2][]{%
  \mathopen{#1|\mkern-1.5mu#1|\mkern-1.5mu#1|}
  #2
  \mathclose{#1|\mkern-1.5mu#1|\mkern-1.5mu#1|}
}
\makeatother
\makeatletter
\DeclareFontFamily{OMX}{MnSymbolE}{}
\DeclareSymbolFont{MnLargeSymbols}{OMX}{MnSymbolE}{m}{n}
\SetSymbolFont{MnLargeSymbols}{bold}{OMX}{MnSymbolE}{b}{n}
\DeclareFontShape{OMX}{MnSymbolE}{m}{n}{
    <-6>  MnSymbolE5
   <6-7>  MnSymbolE6
   <7-8>  MnSymbolE7
   <8-9>  MnSymbolE8
   <9-10> MnSymbolE9
  <10-12> MnSymbolE10
  <12->   MnSymbolE12
}{}
\DeclareFontShape{OMX}{MnSymbolE}{b}{n}{
    <-6>  MnSymbolE-Bold5
   <6-7>  MnSymbolE-Bold6
   <7-8>  MnSymbolE-Bold7
   <8-9>  MnSymbolE-Bold8
   <9-10> MnSymbolE-Bold9
  <10-12> MnSymbolE-Bold10
  <12->   MnSymbolE-Bold12
}{}

\let\llangle\@undefined
\let\rrangle\@undefined
\DeclareMathDelimiter{\llangle}{\mathopen}%
                     {MnLargeSymbols}{'164}{MnLargeSymbols}{'164}
\DeclareMathDelimiter{\rrangle}{\mathclose}%
                     {MnLargeSymbols}{'171}{MnLargeSymbols}{'171}
\makeatother

\hbadness=10000
\vbadness=10000

\usepackage{csquotes}

\DeclareMathOperator*{\Span}{span}

\numberwithin{equation}{section}
\allowdisplaybreaks

\pgfplotsset{select coords between index/.style 2 args={
    x filter/.code={
        \ifnum\coordindex<#1\fi
        \ifnum\coordindex>#2\fi
    }
}}

\usepackage{wrapfig}
\newsavebox{\wrapquestionfigure}
\opencutright
%

\headers{Space-Time Form of the Schrödinger Equation}{S. Hain and K. Urban}

\title{An Ultra-Weak Space-Time Variational Formulation for the Schrödinger Equation\thanks{Submitted \today.
\funding{This work has partly been supported by the center of competence of the University of Stuttgart and Ulm University entitled \emph{IQ$^\text{ST}$--Integrated Quantum Science and Technology}.}}}

\author{Stefan Hain\thanks{
	Ulm University, 
	Institute of Numerical Mathematics, 
	Helmholtzstr.\ 18, 89081 Ulm (Germany), 
	(\email{stefan.hain@alumni.uni-ulm.de}, \email{karsten.urban@uni-ulm.de})}
\and
	Karsten Urban\footnotemark[2]}	

\begin{document}
\maketitle

\begin{abstract}
We present a well-posed ultra-weak space-time variational formulation for the time-dependent version of the linear Schrödinger equation with an instationary Hamiltonian. We prove optimal inf-sup stability and introduce a space-time Petrov-Galerkin discretization with optimal discrete inf-sup stability. 

We show norm-preservation of the ultra-weak formulation. The inf-sup optimal Petrov-Galerkin discretization is shown to be asymptotically norm-preserving, where the deviation is shown to be in the order of the discretization. In addition, we introduce a Galerkin discretization, which has suboptimal inf-sup stability but exact norm-preservation.

Numerical experiments underline the performance of the ultra-weak space-time variational formulation, especially for non-smooth initial data.
\end{abstract}

\begin{MSCcodes}
35L15, 
65M15, 
65M60
\end{MSCcodes}
%

\section{Introduction}
\label{Sec:1}

We consider the inhomogeneous time-dependent linear Schrö\-dinger equation in \emph{atomic units} on a time interval ${I} := (0,T)$, $T > 0$, and a bounded spatial domain $\Omega \subsetneq \mathbb{R}^{3N}$ ($N \geq 1$ being the number of particles) with smooth boundary $\Gamma := \partial \Omega$, 
\begin{align}\label{eq:SchroedingerEquation}
	\left\{
	\begin{aligned}
		\mathrm{i} {\textstyle\frac{\partial}{\partial t}} u(t, x) 
			+  {\textstyle\frac{1}{2}} \Delta_x u(t,x) 
				-  \mathcal{V}(t,x)\, u(t,x) 
			&= g(t,x), \quad 
			&&(t,x) \in {I} \times \Omega, \\
		u(t,x) 
			&= 0,  
			&&(t,x) \in {I} \times \Gamma, \\
		u(0,x) 
			&= u_0, 
			&&x \in \Omega,
	\end{aligned}
	\right.
\end{align} 
 where $g: \Omega_T:={I} \times \Omega \rightarrow \mathbb{C}$ is an inhomogeneous right-hand side, $u_0$ some initial state and $\mathcal{V}: \Omega_T \rightarrow \mathbb{R}$ a real-valued potential. Usually, the problem is formulated on the full space $\mathbb{R}^{3N}$; however, since one can expect that the solution decreases very fast as $|x|\to\infty$ provided that the initial condition $u_0$ and the inhomogeneous right-hand side $g$ do so, we limit ourself to the case of a bounded (but \enquote{sufficiently large}) domain $\Omega \subsetneq \mathbb{R}^{3N}$. On the other hand, we stress the fact that certain parts of the presented material can also be extended to the full space, i.e., $\Omega = \mathbb{R}^{3N}$, with some technical modifications though, \cite{Hain}.
 
The time-dependent Schrödinger equation \eqref{eq:SchroedingerEquation} describes the time evolution of a quantum mechanical state represented by its solution $u$, which is also called \emph{wave function}. It is worth mentioning that unitary time-propagation (which is important from a physical point of view) requires either $g \equiv 0$ or a right-hand side, which depends on the wave function, i.e., $g(u)$. Still, we consider a right-hand side of the form displayed in the first equation in \eqref{eq:SchroedingerEquation}, i.e., $g\not\equiv 0$, but independent of $u$. This setting will be useful for  model reduction of parameterized versions of the Schrödinger equation. Model reduction is also one reason why we are interested in finding a \emph{well-posed weak (or variational) formulation} of \eqref{eq:SchroedingerEquation} in the following sense: Determine Hilbert spaces $\U$, $\V$ of functions and a sesquilinear form $b:\U\times\V\to\C$ such that given an \emph{arbitrary} functional $g\in\V'$, the variational problem 
\begin{equation}\label{eq:VarFormGen}
	b(u,v) = g(v)\quad\forall v\in\V,
\end{equation}
admits a unique solution $u^*\in\U$ (continuously depending on $g$ and $u_0$), which solves  \eqref{eq:SchroedingerEquation} in some appropriate weak sense. Consequently, the spaces $\U$ and $\V$ consist of functions in space and time, i.e., \eqref{eq:VarFormGen} is a \emph{space-time variational form}. 

Of course, there is a huge amount of literature concerning well-posedness results for the Schrödinger equation, but we are not aware of space-time variational approaches as \eqref{eq:VarFormGen}. In fact,  in \cite{SemilinearSchroedinger}, the time-dependent linear and nonlinear Schrödinger equation was treated by using semi-group theory, excluding time-dependent potentials, however. 
Time-dependent Hamiltonians are considered e.g.\ in \cite{ExternalPotential, DautrayLions, LionsMagenes} and \cite{Polizzi2,Polizzi3,Polizzi1, MSprengel, Gabriele1, Gabriele2}, where the multi-particle Schrödinger equation was formulated as a system of nonlinear single-particle Schrödinger equations. However, the results in \cite{Polizzi2,Polizzi3,Polizzi1} do not yield a space-time formulation as in \eqref{eq:VarFormGen}. On the other hand, \cite{MSprengel, Gabriele1, Gabriele2} point towards a well-posed weak space-time formulation, but these papers assume that the regularity of the wave function $u(t,\cdot)$ as a function in space grows for increasing time $t$ (see \cite[Thm.\ 3.13]{MSprengel},  \cite[Thm.\ 13]{Gabriele1}, \cite[Thm.\ 1]{Gabriele2}). However, as the Schrödinger equation (similar to wave or transport equations and in contrast to the heat equation) has no regularization effect, the (possibly low) regularity of the initial condition $u_0$ (and of a right-hand side $g$) is inherited for all times. We are particularly interested in minimal regularity requirements, which exceeds the findings in  \cite{MSprengel, Gabriele1, Gabriele2}.

To the best of our knowledge, a space-time formulation of the Schrödinger equation was first considered in \cite{SpacetimeDPG,CMollet}, however, for a single free particle (i.e., for a \emph{stationary} Hamiltonian without potential). In \cite{CMollet}, an approach analogously to the parabolic case was considered for the space-time variational formulation as well as its discretization. However, well-posedness and stability for the discretization have not been presented there. In \cite{SpacetimeDPG}, a well-posed ultra-weak formulation using a (nonconforming) discontinuous Petrov-Galerkin method was presented. This approach is somehow related to the present paper since it also addresses optimal stability (see below). Instead, we follow a path which was earlier presented for the transport equation in \cite{BrunkenSmetanaUrban,DahmenTransport} and for the wave equation in \cite{henning2021weak}, see also \cite{MR4387197,MR4283879}. The reason is that in particular  w.r.t.\ model reduction, we prefer a conforming discretization.

\subsection*{Outline}
This paper is organized as follows: In Section \ref{Sec:2}, we recall some facts on inf-sup stable variational problems and discuss why the known setting as in the parabolic case is not appropriate for the Schrödinger equation. Then we present an optimally inf-sup stable ultra-weak variational formulation there. Section \ref{Sec:3} is devoted to stable Petrov-Galerkin discretizations and suggests an adequate discretization specifically for the Schrödinger equation. In Section \ref{Sec:4}, we focus on the arising finite-dimensional algebraic system, which turns out to be of tensor product form. Finally, we present results of numerical experiments in Section \ref{Sec:5} in particular w.r.t.\ the numerical performance of the ultra-weak variational formulations -- especially for problems with non-smooth data.

\subsection*{Notation}
We abbreviate $H := L_2(\Omega; \mathbb{C})$ and always identify $L_2$-spaces with their duals. Then, we obtain Gelfand triples in space, i.e., $V := H_0^1(\Omega; \mathbb{C}) \hookrightarrow H \cong H' \hookrightarrow H^{-1}(\Omega; \mathbb{C}) := V'$, where all embeddings are continuous and dense. The spaces $H$ and $V$ are equipped with the usual inner products and norms. As usual, we identity the duality pairing $\langle u,v \rangle_{V' \times V}$ with the inner product $(u,v)_H$, if $u\in H$.  Finally, we set $\H:=L_2(I;H)\cong L_2(\Omega_T; \C)$.

%

\section{Variational Formulations of the Schrödinger Equation}
\label{Sec:2}

We start by recalling some known facts on the well-posedness of variational formulations and discuss possible approaches towards the Schrödinger equation.

\subsection{Inf-sup-theory}
\label{Subsec:InfSup}
 The well-posedness of \eqref{eq:VarFormGen} is described by the following well-known fundamental statement. 

\begin{theorem}[Ne\v{c}as Theorem, e.g.\ {\cite[Thm.\ 3.1]{necas2012direct}}]\label{thm:Necas}
	Let $\U$, $\V$ be Hilbert spaces, let $g\in\V'$ be given and $b:\U\times\V\to\C$ be a \emph{bounded} sesquilinear form, i.e., 
	\begin{align}
		\tag{C.1}\label{C1}
		&\exists\; \gamma<\infty:\quad
		|b(u,v)[ \le \gamma \| u\|_\U\, \| v\|_\V,
		\quad\text{for all } u\in\U, v\in\V.
	\end{align}
	Then, the variational problem \eqref{eq:VarFormGen} admits a unique solution $u^*\in\U$, which depends continuously on the data $g\in\V'$ if and only if
	\begin{align}
		\tag{C.2}\label{C2}
		&\beta:=\inf_{u\in\U} \sup_{v\in\V} \frac{|b(u,v)|}{\| u\|_\U\, \| v\|_\V}>0 
		&&\text{(inf-sup condition)};
		\\
		\tag{C.3}\label{C3}
		&\forall\, 0\ne v\in\V\quad \exists\, u\in\U: \quad b(u,v)\ne 0
		&&\text{(surjectivity)}.
	\end{align}
		\vskip-15pt\qed
\end{theorem}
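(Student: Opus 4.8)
The plan is to recast the variational problem as an operator equation and to characterize the required bijectivity through \eqref{C2}--\eqref{C3}. First I would introduce the bounded operator $B\in\mathcal{L}(\U,\V')$ associated with $b$ via $\langle Bu,v\rangle_{\V'\times\V}:=b(u,v)$; by \eqref{C1} one has $\|B\|\le\gamma$, and \eqref{eq:VarFormGen} is equivalent to the single equation $Bu=g$ in $\V'$. Since a Hilbert space is reflexive, the whole assertion reduces to the statement that $B$ is boundedly invertible if and only if \eqref{C2} and \eqref{C3} hold, the solution then being $u^*=B^{-1}g$ and the continuous dependence on $g$ following from the bounded inverse theorem — in fact with the quantitative constant derived below.

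The bridge between \eqref{C2} and $B$ is the identity $\|Bu\|_{\V'}=\sup_{0\ne v\in\V}|b(u,v)|/\|v\|_\V$, so that \eqref{C2} is nothing but the a~priori estimate $\|Bu\|_{\V'}\ge\beta\,\|u\|_\U$ for all $u\in\U$. From this I would deduce two things: $B$ is injective, and $\operatorname{ran}(B)$ is closed in $\V'$ — indeed, if $(Bu_n)_n$ is Cauchy then $(u_n)_n$ is Cauchy by the estimate, and boundedness of $B$ identifies the limit as an element of $\operatorname{ran}(B)$. On the other hand, \eqref{C3} says exactly that no nonzero $v\in\V$ is annihilated by every functional in $\operatorname{ran}(B)$, i.e.\ the pre-annihilator ${}^\perp\!\operatorname{ran}(B)$ is trivial.

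It then remains to combine these observations through duality: for a \emph{closed} subspace $M\subseteq\V'$ of the reflexive space $\V'$ one has $M=({}^\perp M)^\perp$, hence $M=\V'$ if and only if ${}^\perp M=\{0\}$ (a Hahn--Banach argument). Applied to $M=\operatorname{ran}(B)$, which is closed by \eqref{C2}, this converts \eqref{C3} into surjectivity of $B$. Thus \eqref{C2} and \eqref{C3} together force $B$ to be bijective, and $\|Bu\|_{\V'}\ge\beta\|u\|_\U$ yields $\|B^{-1}\|_{\mathcal{L}(\V',\U)}\le\beta^{-1}$, which is precisely well-posedness with stability constant $\beta^{-1}$. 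For the converse, bounded invertibility of $B$ implies surjectivity, whence ${}^\perp\!\operatorname{ran}(B)={}^\perp\V'=\{0\}$, i.e.\ \eqref{C3}, while $\|u\|_\U=\|B^{-1}Bu\|_\U\le\|B^{-1}\|\,\|Bu\|_{\V'}$ gives \eqref{C2} with $\beta\ge\|B^{-1}\|^{-1}>0$.

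I expect the closed-range/duality step of the third paragraph to be the only point that needs genuine care; everything else is bookkeeping with the definitions of $B$ and of the dual norm. One could alternatively run the entire argument after a Riesz identification $\V'\cong\V$, replacing $B$ by the operator $R_\V B\in\mathcal{L}(\U,\V)$ obtained from the Riesz isomorphism $R_\V:\V'\to\V$, but this reformulation does not circumvent the core closed-range argument.
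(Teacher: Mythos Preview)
Your argument is correct and follows the standard operator-theoretic route to the Ne\v{c}as theorem: identify $b$ with $B\in\mathcal{L}(\U,\V')$, read \eqref{C2} as the a~priori bound $\|Bu\|_{\V'}\ge\beta\|u\|_\U$ (hence injectivity and closed range), read \eqref{C3} as triviality of the pre-annihilator, and close via the duality identity $M=({}^\perp M)^\perp$ for closed subspaces. The converse direction is also handled correctly.

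Note, however, that the paper does \emph{not} supply its own proof of this statement: the theorem is quoted from the literature (the citation after the theorem name) and closed immediately with a \textsc{qed} symbol. So there is no ``paper's proof'' to compare against; your proposal simply fills in what the paper deliberately omits, and it does so along the lines one finds in standard references.
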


The inf-sup constant $\beta$ (or some lower bound) also plays a crucial role for the numerical approximation of the solution $u\in\U$ since it enters the relation of the approximation error and the residual (by the Xu-Zikatanov lemma \cite{MR1971217}, see also \eqref{eq:bestApprox} below). This motivates our interest in the size of $\beta$: the closer to unity, the better.

A standard tool (at least) for (i) proving the inf-sup-stability in (C.2), (ii) stabilizing finite-dimensional discretizations, and (iii) getting  sharp bounds for the inf-sup constant, is to determine the so-called \emph{supremizer}. To define it, let $b:\U\times\V\to\R$ be a generic bounded bilinear form and $0\ne u\in\U$ be given. Then, the \emph{supremizer} $s_u\in\V$ is defined as the unique solution of the variational problem
\begin{equation}\label{eq:supremizer}
	(s_u,v)_\V = b(u,v)\qquad \forall v\in\V.
\end{equation}
It is easily seen that
\begin{equation}\label{eq:supremizer2}
	\sup_{v\in\V} \frac{|b(u,v)|}{\| v\|_\V} = \sup_{v\in\V} \frac{|(s_u,v)_\V|}{\| v\|_\V} = \| s_u\|_\V,
\end{equation}
which justifies the name \emph{supremizer}. Then, $\beta=\inf\limits_{u\in\U} \frac{\| s_u\|_\V}{\| u\|_\U}$.

\subsection{A too naive approach}
\label{SubSec:Var}
A straightforward approach to derive a space-time variational formulation of the Schrödinger equation is described in \cite[Ex.\ 6.4.6]{CMollet}, see also \cite[Thm\ 12, 13]{Gabriele1}. For simplicity, we restrict ourselves in this subsection to the case $u_0=0$, but the arguments here can easily be extended to inhomogeneous initial conditions as well. The (too naive) idea is to follow the path of space-time variational formulations of the heat equation as e.g.\ in \cite{SchwabStevenson,PateraUrbanImprovedErrorBound} and setting\footnote{$H_{\{0\}}^1({I}):=\{ \theta\in H^1(I): \theta(0)=0\}$ and $H_{\{T\}}^1({I}):=\{ \theta\in H^1(I): \theta(T)=0\}$ for later reference.} $\U := L_2\left( {I} ; V \right) \cap H^1_{\{0\}}\left( {I} ; V' \right)$ as well as $\V := L_2\left( {I} ; V \right)$ with norms $\Vert u \Vert_{\U}^2 := \Vert u \Vert_{L_2({I} ; V)}^2 + \Vert \partial_t u \Vert_{L_2({I} ; V')}^2$ and $\Vert v \Vert_{\V} := \Vert v \Vert_{L_2({I} ; V)}$. This yields a problem of the form \eqref{eq:VarFormGen} with 
\begin{align*}
	b(u, v) :=
		\int_0^T\
		\big[
		\mathrm{i} \langle \partial_t u(t), v(t) \rangle_{V' \times V} 
		-  a_t(u(t), v(t))\big] dt, 
\end{align*}
and $g(v) := \int_0^T \!\!\langle g(t) ,v(t) \rangle_{V' \times V} dt$, where the sesquilinear form $a_t: V\times V\to\C$ is defined by\footnote{$\cV(t)\equiv \cV(t,\cdot)$} 
\begin{align}\label{eq:at}
	a_t(\phi,\varphi):= \textstyle{\frac12} (\nabla_x \phi, \nabla_x \varphi)_H 
		+ (\cV(t)\phi,\varphi)_H, 
		\quad \phi, \varphi\in V.
\end{align}
Since the potential is real-valued and assuming that it is bounded and non-negative\footnote{The assumption of non-negativity is posed here just for convenience in order to show that a standard variational form does not yield a well-posed problem even in this simple case of a non-negative potential.}, it is easily seen that $a_t(\cdot,\cdot)$ is bounded, hermitian and coercive. Then, it is  straightforward to show that $b(\cdot,\cdot)$ is bounded, i.e., \eqref{C1}. In view of Theorem \ref{thm:Necas}, one would need to show  surjectivity \eqref{C3} and the inf-sup condition \eqref{C2}. The latter one, however, fails in general. 

To get an idea, why the inf-sup condition might fail, we consider the supremizer as in \eqref{eq:supremizer} for the case $\V=L_2(I;V)$. We denote by $A(t) := -\frac{1}{2} \Delta_x+\cV(t) \in \mathcal{L}(V, V')$ the operator associated  to $a_t(\cdot, \cdot)$, in the sense that $\langle A(t) \phi, \varphi \rangle_{V' \times V} := a_t(\phi,\varphi)$ for all $\phi,\varphi \in V$. Then, the supremizer $s_u$ for some $0\ne u\in\U$ is easily seen to read $s_u(t)=\mathrm{i}\,A(t)^{-1}\dot u(t)+u(t)$, which is quite similar to the parabolic case in \cite[App.\ A]{SchwabStevenson} with the only difference of the appearance of the complex unit $\mathrm{i}$ -- which actually causes the problem. In fact, in view of \eqref{eq:supremizer2}, we have for any $u\in\U$ that 
\begin{align*}
	\| s_u\|_\V^2
	&
	= ( \mathrm{i}\, A^{-1}\dot u+u, \mathrm{i}\,A^{-1}\dot u + u )_\V
	= \| A^{-1}\dot u \|_\V^2 + \| u\|_\V^2 - 2\, \Im (A^{-1}\dot u,u)_\V.
\end{align*}
We would now need to find a $\beta>0$ such that $\| s_u\|_\V\ge\beta\| u\|_\U$ (i.e., strictly positive for $u\ne 0$), which turns out to be impossible for arbitrary $u\ne 0$. The difference to the heat equation lies in the last term, which for the heat equation reads $-2(A^{-1}\dot u,u)_\V=\| u(0)\|_H^2$ (instead of the imaginary part). This term is thus non-negative (and the whole term strictly positive for $u\ne 0$), whereas we were not able to bound $ - 2\, \Im (A^{-1}\dot u,u)_\V$ from below; just using Cauchy-Schwarz and Young's inequality for this term yields $\| s_u\|_\V\ge 0$, which is clearly useless. 
We note, that this observation of not being able to find a positive lower bound for $\| s_u\|_\V$ is not just a theoretical consideration. In fact, numerical experiments for a corresponding Petrov-Galerkin method clearly showed instabilities, see \cite{Hain}.

\subsection{An optimally inf-sup stable ultra-weak variational form}
As already pointed out earlier, the Schrödinger equation does not have a smoothing effect, rough initial data or right-hand sides imply a rough solution for all times. Since this is quite similar to transport and wave equations, it seems natural to follow the same path that has successfully lead to optimally stable variational formulations in those cases, namely \emph{ultra-weak} formulations, see \cite{BrunkenSmetanaUrban,DahmenTransport,henning2021weak} (compare also the discontinuous Petrov-Galerkin setting e.g.\  in \cite{SpacetimeDPG}).

The idea is to multiply \eqref{eq:SchroedingerEquation} with sufficiently smooth test functions $v:\bar{I}\times\bar\Omega\to\C$ and to perform integration by parts w.r.t.\ to both space and time, i.e., 
\begin{align*}
	&(g,v)_{\H}
	= \int_I\int_\Omega g(t,x)\, \overline{v(t,x)}\, dx\, dt\\
	&\kern+5pt= \mathrm{i} \int_I\int_\Omega \dot{u}(t,x)\, \overline{v(t,x)}\, dx\, dt
		+\int_I\int_\Omega \Big( \textstyle{\frac12} \Delta_x u(t,x)-\cV(t,x)\, u(t,x)\Big) \overline{v(t,x)}\, dx\, dt\\
	&\kern+5pt= \mathrm{i} (u(T), v(T))_{H} - \mathrm{i} (u(0), v(0))_{H)} 
		+ (u, (\mathrm{i} \partial_t + \textstyle{\frac12} \Delta_x-\cV(t)) v)_{\H} \\
	&\kern+5pt= - \mathrm{i} (u_0, v(0))_{H} 
		+ (u, (\mathrm{i} \partial_t + \textstyle{\frac12} \Delta_x-\cV(t)) v)_{\H} 
\end{align*}
if $v(T)=0$. This motivates an ultra-weak variational formulation \eqref{eq:VarFormGen} of \eqref{eq:SchroedingerEquation} by defining 
\begin{subequations}
	\label{eq:veryweak}
	\begin{align}
		b(u,v) &:= (u, S^*v)_{\H)}  
			:= (u, \mathrm{i}\,\partial_t v + \textstyle{\frac12} \Delta_x v - \cV(t) v)_{\H}, 
			\label{eq:veryweak:1}\\
		g(v)	&:= \langle g,v\rangle_{\V'\times\V}+ \mathrm{i}\, (u_0, v(0))_{H},  	
			\label{eq:veryweak:2}
	\end{align}
\end{subequations}
where $S^*$ will be shown later to be the adjoint of the Schr\"odinger operator $S:=\mathrm{i}  \ts{\frac{\partial}{\partial t}}+ \ts{\frac{1}{2}} \Delta_x- \mathcal{V}(t)$ in an appropriate sense.

We want to show that this setting admits a well-posed problem of the form \eqref{eq:VarFormGen} if we define $\U$ and $\V$ appropriately. In order to do so, we are now going to follow \cite{DahmenTransport} and start by interpreting \eqref{eq:SchroedingerEquation} for $u_0=0$ in the classical sense, i.e., pointwise by setting
\begin{align*}
	S_{\circ} u (t,x) &:= \mathrm{i}  \ts{\frac{\partial}{\partial t}} u(t, x) 
		+ \ts{\frac{1}{2}} \Delta_x u(t,x) 
		- \mathcal{V}(t,x)\, u(t,x),
		\quad (t,x)\in\Omega_T=I\times\Omega, 
\end{align*}
as well as $g_{\circ}(t,x):=g(t,x)$. Homogeneous initial and boundary conditions are associated to $S_\circ$ as essential boundary conditions. Then, \eqref{eq:SchroedingerEquation} reads $S_{\circ} u = g_{\circ}$ in the space $C(\Omega_T;\C)$. We denote by\footnote{$C_{\{0\}}(\bar{I};\C) :=\{\theta\in C(\bar{I};\C):\, \theta(0)=0\}$ and $C_0(\bar\Omega;\C):=\{ \phi\in C(\bar\Omega;\C):\, \phi(x)=0, x\in\Gamma\}$.}
\begin{align}
	D(S_{\circ}) 
	&:= \{ u: \overline\Omega_T\to\C:
	&&\hskip-18pt S_{\circ} u\in C(\overline\Omega_T;\C); u(t,x)=0 \text{ for } (t,x) \in {I} \times \Gamma; \nonumber\\
	&&&\hskip-18pt u(0,x)= 0 \text{ for } x \in \Omega\} \nonumber \\
	&=: C^{1,2}_{\{0\},\Gamma}(\Omega_T;\C)
	&&\hskip-21pt = 
		[C^1(I;\C) \cap C_{\{0\}}(\bar{I};\C)] \otimes [ C^2(\Omega;\C) \cap C_0(\bar\Omega;\C)],
		\label{eq:C120Gamma}
\end{align}
the classical domain of $S_{\circ}$ incorporating homogeneous initial and boundary conditions. Of course, the regularity of the potential $\cV$ is crucial and we will come back to this topic later, see Theorem \ref{Thm:stablecont} below. In \eqref{eq:C120Gamma} the superscripts denote the regularity in time and space, respectively, whereas the index $\{0\}$ refers to homogeneous temporal conditions for $t=0$ and $\Gamma$ for homogeneous Dirichlet conditions on $\Gamma$. We will use a similar notation below also for $t=T$. For the above setting it is necessary that $\cV\in C(\overline\Omega_T;\C)$.

For the rest of the paper, we shall always set 
\begin{align}\label{eq:U}
	\U:=
	L_2(I;H) = \H
\end{align}
with its usual topology and we define the formal adjoint $S_\circ^*$ of $S_\circ$ by
\begin{align*}
	(S_\circ u, v)_\U = (u, S_\circ^* v)_\U
	\quad\text{ for all } u,v\in C^\infty_0(\Omega_T;\C).
\end{align*}
Following \cite{DahmenTransport}, we need to verify the following conditions
\begin{compactenum}
	\item[($S^*1$)] $S_\circ^*$ is injective on the dense subspace $D(S_\circ^*)\subset\U$ and 
	\item[($S^*2$)] the range $R(S_\circ^*)\hookrightarrow\U$ is densely imbedded.
\end{compactenum}
In fact, the properties ($S^*1$) and ($S^*2$) ensure that $\| v\|_\V := \| S_\circ^* v\|_\U$ is a norm on $D(S_\circ^*)$. Then, 
\begin{align}\label{Def:V}
	\V := \mathrm{clos}_{\|\cdot\|_\V} \{D(S_\circ^*)\} \subset\U,
	\quad
	(v,w)_\V := (S^* v, S^*w)_\U,
	\,\,\, v, w\in\V,
\end{align}
is a Hilbert space with induced norm
\begin{align}\label{eq:normV}
	\| v\|_\V := \| S^* v\|_\U,
\end{align}
where $S^*$ is the continuous extension of $S^*_\circ$ from $D(S_\circ^*)$ to $\V$. As we shall see below, this framework yields a well-posed ultra-weak variational form of the Schrödinger equation. To see this, we need to detail the involved ingredients. We start by determining the formal adjoint $S_{\circ}^*$.

\begin{proposition}
\label{proposition:FormalAdjointSchroedinger}
	The formal adjoint  Schrödinger operator reads $S_{\circ}^* = S_{\circ}$.
\end{proposition}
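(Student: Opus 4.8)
The plan is to compute $S_\circ^*$ directly from its defining relation $(S_\circ u, v)_\U = (u, S_\circ^* v)_\U$ for all $u, v \in C_0^\infty(\Omega_T; \C)$, where $\U = \H = L_2(I; H)$ carries the Hermitian $L_2$-inner product $(\phi,\psi)_\U = \int_I \int_\Omega \phi\, \overline{\psi}\, dx\, dt$. Since $u$ and $v$ have compact support in $\Omega_T$, all boundary terms from integration by parts vanish, so the computation is a clean term-by-term exercise.

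First I would treat the three summands of $S_\circ = \mathrm{i}\,\partial_t + \tfrac12 \Delta_x - \cV(t)$ separately. For the time-derivative term, integration by parts in $t$ gives $\int_I (\mathrm{i}\, \partial_t u, v)_H\, dt = -\int_I (\mathrm{i}\, u, \partial_t v)_H\, dt$; here the crucial point is that moving the factor $\mathrm{i}$ across the (conjugate-linear) second slot of the Hermitian inner product produces $(u, \overline{\mathrm{i}}\, \partial_t v)_H = (u, -\mathrm{i}\, \partial_t v)_H$, and the sign from integration by parts cancels this, leaving $(u, \mathrm{i}\, \partial_t v)_\U$. For the Laplacian term, two integrations by parts in $x$ (using that $\Delta_x$ is formally self-adjoint and real, so no conjugation issue arises) yield $(\tfrac12 \Delta_x u, v)_\U = (u, \tfrac12 \Delta_x v)_\U$. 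For the potential term, since $\cV$ is real-valued, multiplication by $\cV(t,x)$ is a self-adjoint operation on $\H$, so $(-\cV u, v)_\U = (u, -\cV v)_\U$. Summing the three contributions gives $(u, S_\circ^* v)_\U = (u, (\mathrm{i}\,\partial_t + \tfrac12 \Delta_x - \cV)v)_\U = (u, S_\circ v)_\U$, and since this holds for all $u$ in a dense set, $S_\circ^* = S_\circ$.

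There is no real obstacle here — the statement is essentially the observation that the Schrödinger operator is formally skew-adjoint in the $\partial_t$ part but the factor $\mathrm{i}$ restores self-adjointness under the complex $L_2$-pairing, while $\Delta_x$ and $\cV$ are already formally self-adjoint. The only point requiring a little care is bookkeeping the complex conjugation in the sesquilinear $L_2$-inner product when commuting the imaginary unit, which is exactly what makes the sign work out (in contrast to a bilinear pairing, where one would instead get $S_\circ^* = -\mathrm{i}\,\partial_t + \tfrac12\Delta_x - \cV$). One should also note that the identity is to be read on the level of the formal differential expression, with the domain $D(S_\circ^*)$ and the resulting norm $\|v\|_\V = \|S_\circ^* v\|_\U$ to be analyzed separately via conditions $(S^*1)$ and $(S^*2)$; the present proposition only fixes the symbol of the adjoint.
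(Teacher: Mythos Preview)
Your proof is correct and follows essentially the same route as the paper: both compute $(S_\circ u, v)_\U$ for $u,v\in C_0^\infty(\Omega_T;\C)$ via integration by parts in time and space, using that $\cV$ is real-valued, to arrive at $(u, S_\circ v)_\U$. Your treatment is slightly more explicit about how the complex conjugation of $\mathrm{i}$ in the sesquilinear pairing cancels the sign from integration by parts in $t$, which the paper leaves implicit in the line $\overline{\mathrm{i}\,\partial_t v}$.
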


\begin{proof}
	Let $u, v \in C_0^{\infty}(\Omega_T;\C)$. Then, using integration by parts in both space and time variables, it holds 
\allowdisplaybreaks
\begin{align*}
(S_{\circ} u, v)_{\U} 
	&= \left( \mathrm{i} \partial_t u 
		+ \textstyle{\frac{1}{2}} \Delta_x u 
		- \mathcal{V}(t,x) u, \; v \right)_{\H} \\
	&\kern-20pt=  
		\int_0^T\!\!\!\int_{\Omega} \left[
			\mathrm{i} \partial_t u(t,x) \; \overline{v(t,x)}  
	 		+ \textstyle{\frac{1}{2}} \Delta_x  u(t,x) \overline{v(t,x)} 
			- \mathcal{V}(t,x) u(t,x) \overline{v(t,x)} \right] dx \; dt \\
	&\kern-20pt= \int_0^T\!\!\!\int_{\Omega}
		\left[ 
			u(t,x) \; \overline{\mathrm{i} \partial_t v(t,x)}
			+ u(t,x) \overline{\textstyle{\frac{1}{2}} \Delta_x v(t,x)} 
			- \mathcal{V}(t,x) u(t,x) \overline{v(t,x)} 
			\right] dx \; dt \\
	&\kern-20pt= \left(u, \mathrm{i} \partial_t v + \textstyle{\frac{1}{2}} \Delta_x v - \mathcal{V}(t,x) v \right)_{\H} 
	= (u, \; S_{\circ}v)_{\U},
\end{align*}
since $\cV$ is real-valued, which proves the claim.
\end{proof}

Next, we need to determine domain and range of $S_{\circ}^*$. To this end, we first note, that due to the integration by parts w.r.t.\ time used in Proposition \ref{proposition:FormalAdjointSchroedinger}, the initial conditions in $S_{\circ}$ are transformed to \emph{terminal} conditions for $S_{\circ}^*$. In view of \eqref{eq:C120Gamma}, we thus obtain
\begin{align}
\label{eq:ClassicalDomainFormalAdjointSchroedinger}
	D(S_{\circ}^*) = C^{1,2}_{\{T\},\Gamma}(\Omega_T;\C)
	=  [C^1(I;\C) \cap C_{\{T\}}(\bar{I};\C)] \otimes [ C^2(\Omega;\C) \cap C_0(\bar\Omega;\C)].
\end{align}
The range $R(S_\circ)$ in the classical sense then reads 
\begin{align*}
	R(S_\circ) = C(\overline\Omega_T;\C),
\end{align*} 
so that ($S^*2$) holds. Therefore it remains to verify that ($S^*1$) is satisfied.

\begin{proposition}
\label{proposition:FormalAdjointInjective}
The formal adjoint $S_{\circ}^*$ is injective on $D(S_{\circ}^*)$.
\end{proposition}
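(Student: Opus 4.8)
The plan is to establish injectivity by the classical charge (equivalently, $L_2$-energy) conservation of the Schrödinger flow, run backward in time from the terminal condition. So let $v\in D(S_{\circ}^*)=C^{1,2}_{\{T\},\Gamma}(\Omega_T;\C)$ satisfy $S_{\circ}^*v = S_{\circ}v = \mathrm{i}\,\partial_t v + \tfrac12\Delta_x v - \cV(t)\,v = 0$ on $\Omega_T$; by Proposition~\ref{proposition:FormalAdjointSchroedinger} this is indeed the Schrödinger operator. Multiplying the equation by $-\mathrm{i}$ gives the pointwise identity $\partial_t v = \tfrac{\mathrm{i}}{2}\Delta_x v - \mathrm{i}\,\cV(t)\,v$. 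Because of the tensor-product form of $D(S_{\circ}^*)$, the map $t\mapsto v(t,\cdot)$ is continuously differentiable with values in $C^2(\Omega;\C)\cap C_0(\bar\Omega;\C)\subset H$, so $t\mapsto\|v(t)\|_H^2$ is $C^1$ on $[0,T]$ and I would differentiate it.

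The key step is the resulting energy identity,
\begin{align*}
  \tfrac{d}{dt}\|v(t)\|_H^2
  = 2\,\Re\,(\partial_t v(t),v(t))_H
  = 2\,\Re\!\left(\tfrac{\mathrm{i}}{2}\,(\Delta_x v(t),v(t))_H - \mathrm{i}\,(\cV(t)\,v(t),v(t))_H\right).
\end{align*}
Since $v(t,\cdot)\in C^2(\Omega;\C)\cap C_0(\bar\Omega;\C)$ and $\Gamma$ is smooth, integration by parts (Green's identity) yields $(\Delta_x v(t),v(t))_H = -\|\nabla_x v(t)\|_H^2\in\R$, and since $\cV$ is real-valued we have $(\cV(t)\,v(t),v(t))_H = \int_\Omega \cV(t,x)\,|v(t,x)|^2\,dx\in\R$. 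Hence $(\partial_t v(t),v(t))_H$ is purely imaginary, so $\tfrac{d}{dt}\|v(t)\|_H^2\equiv 0$ and $\|v(t)\|_H$ is constant on $[0,T]$.

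It then remains to exploit the terminal condition built into $D(S_{\circ}^*)$, namely $v(T,\cdot)=0$: combined with the conservation just shown it gives $\|v(t)\|_H = \|v(T)\|_H = 0$ for every $t\in[0,T]$, whence $v(t,\cdot)=0$ in $H$, and by continuity of $v$ on $\overline{\Omega}_T$ we conclude $v\equiv 0$, i.e.\ ($S^*1$). The argument is essentially routine; the only points requiring attention are the justification of differentiating $\|v(t)\|_H^2$ in $t$ and of the spatial integration by parts — both guaranteed by the regularity encoded in $D(S_{\circ}^*)$ — together with the observation, the same one already used in Proposition~\ref{proposition:FormalAdjointSchroedinger}, that real-valuedness of $\cV$ is exactly what makes the potential term contribute only to the imaginary part. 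In particular, no lower bound, boundedness, or smoothness of $\cV$ beyond continuity is needed.
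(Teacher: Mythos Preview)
Your proof is correct and follows essentially the same route as the paper: both arguments test the equation with the function itself, use integration by parts on the Laplacian and the real-valuedness of $\cV$ to see that the spatial terms are real, and deduce $\tfrac{d}{dt}\|v(t)\|_H^2=0$, then invoke the terminal condition $v(T)=0$. The only cosmetic difference is that the paper phrases this as ``take the imaginary part of the tested equation and integrate over $[t,T]$'' whereas you differentiate $\|v(t)\|_H^2$ directly; these are the same computation.
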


\begin{proof}
Let $u\in C^{1,2}_{\{T\},\Gamma}(\Omega_T)$ such that $S_{\circ}^*u=0$. Then, for any $\phi \in V$ and for $s \in{I}$ we have
\begin{align*}
	\mathrm{i} \left( \partial_t \psi(s),  \phi \right)_{H} 
		- \textstyle{\frac{1}{2}} \left( \nabla_x \psi(s)  , \nabla_x\phi \right)_{H} 
		- \left( \mathcal{V}(s) \psi(s), \phi \right)_{H} 
		= 0.
\end{align*}
We choose $\psi(s)$ as test function and obtain 
\begin{align}\label{eq:Equation4}
	\mathrm{i} \left( \partial_s \psi(s) , \psi(s) \right)_{H} 
	- \textstyle{\frac{1}{2}}\left( \nabla_x \psi(s) , \nabla_x \psi(s)\right)_{H} 
	- \left( \mathcal{V}(s) \psi(s), \psi(s)\right)_{H} = 0.
\end{align}
Note, that $ \textstyle{\frac{1}{2}}\left( \nabla_x \psi(s) , \nabla_x \psi(s)\right)_{H} - \left( \mathcal{V}(s) \psi(s), \psi(s)\right)_{H} \in\R$.
Thus, integrating over $[t,T]$ for $t \in{I}$ and taking the imaginary part of \eqref{eq:Equation4} yields
\begin{align*}
	0
	&= \int_t^T{ \left( \partial_s \psi(s), \psi(s) \right)_{H} ds}  
	= \ts{\frac{1}{2}} \int_t^T{ \ts{\frac{d}{ds}} \Vert \psi(s) \Vert_{H}^2 ds} 
	= -\ts{\frac{1}{2}} \Vert \psi(t) \Vert_{H}^2,
\end{align*}
since $\psi(T)=0$, which implies $\Vert \psi(t) \Vert_{H} = 0$ for all $t \in{I}$ and proves the claim.
\end{proof}

\begin{theorem}\label{Thm:stablecont}
	Let $\cV\in L_\infty(\Omega_T;\R)$ and $g\in\V'$. Moreover, let $\U$, $\V$, $b(\cdot,\cdot)$, and $g(\cdot)$ be defined as in \eqref{eq:U}, \eqref{Def:V} and \eqref{eq:veryweak}, respectively. 
	Then, the variational problem \eqref{eq:VarFormGen} admits a unique solution $u^*\in\U$. 
	In particular,
	\begin{equation}
		\beta := 
		\inf_{u\in\U} \sup_{ v\in\V} \frac{b(u, v)}{\| u\|_\U\, \|  v\|_{\V}} 
		= \sup_{u\in\U} \sup_{ v\in\V} \frac{b(u, v)}{\| u\|_\U\, \|  v\|_{\V}} 
		= 1.
	\end{equation}
\end{theorem}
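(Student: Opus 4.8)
The plan is to verify the three hypotheses of the Ne\v{c}as theorem (Theorem~\ref{thm:Necas}) and, in the process, to read off that the boundedness constant $\gamma$ in \eqref{C1}, the inf-sup constant $\beta$ in \eqref{C2}, and the $\sup$-$\sup$ quantity all equal $1$. Everything rests on one structural fact: by the construction \eqref{Def:V}--\eqref{eq:normV} of $\V$ and its norm, the extended operator $S^*\colon\V\to\U$ is a \emph{surjective isometry}; granting this, the inf-sup analysis is a one-line duality computation in $\U=\H$. Boundedness \eqref{C1} is already clear from \eqref{eq:veryweak:1} and Cauchy--Schwarz in $\H$: $|b(u,v)|=|(u,S^*v)_\H|\le\|u\|_\U\,\|S^*v\|_\U=\|u\|_\U\,\|v\|_\V$, so $\gamma\le1$.

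First I would prove that $S^*\colon\V\to\U$ is a surjective isometry. On $D(S_\circ^*)$ one has $\|S_\circ^*v\|_\U=\|v\|_\V$ by the very definition of $\|\cdot\|_\V$ (which is a genuine norm by Proposition~\ref{proposition:FormalAdjointInjective}), so $S_\circ^*$ is an isometry of $(D(S_\circ^*),\|\cdot\|_\V)$ into $\U$, and hence so is its continuous extension $S^*$ to $\V$; in particular $S^*$ is injective. Since $\V$ is complete, the range $R(S^*)$ is closed in $\U$; moreover $R(S^*)\supseteq R(S_\circ^*)$, which is dense in $\U$ by property~($S^*2$) -- this is exactly where the classical statement $R(S_\circ)=C(\overline\Omega_T;\C)$, together with $S_\circ^*=S_\circ$ from Proposition~\ref{proposition:FormalAdjointSchroedinger}, enters. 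Consequently $R(S^*)=\U$, so $S^*$ is a bijective isometry.

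Next I would fix $0\ne u\in\U$. Writing out the supremizer quotient as in \eqref{eq:supremizer2} and substituting $w=S^*v$, which runs over all of $\U$ as $v$ runs over $\V$ by the previous step,
\begin{align*}
  \sup_{0\ne v\in\V}\frac{|b(u,v)|}{\|v\|_\V}
  &= \sup_{0\ne v\in\V}\frac{|(u,S^*v)_\H|}{\|S^*v\|_\U} \\
  &= \sup_{0\ne w\in\U}\frac{|(u,w)_\H|}{\|w\|_\U}
  = \|u\|_\U,
\end{align*}
the supremum being attained at the supremizer $s_u=(S^*)^{-1}u$, for which $b(u,s_u)=\|u\|_\H^2\ge0$ is real (so the value does not depend on whether one reads $b(u,v)$ or $|b(u,v)|$ in the statement). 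Dividing by $\|u\|_\U$ gives the value $1$ for every $u\ne0$; taking the infimum resp.\ the supremum over $u$ then yields $\beta=1$ and the asserted $\sup$-$\sup$ equality $=1$ (which also pins down $\gamma=1$). Finally, surjectivity \eqref{C3} follows by choosing, for $0\ne v\in\V$, the function $u:=S^*v\in\U$, which gives $b(u,v)=(S^*v,S^*v)_\H=\|v\|_\V^2>0$. With \eqref{C1}, \eqref{C2} and \eqref{C3} established and $g\in\V'$ by hypothesis, Theorem~\ref{thm:Necas} yields the unique solution $u^*\in\U$.

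The step I expect to be the main obstacle is the surjectivity of $S^*$: concretely, the classical solvability statement $R(S_\circ)=C(\overline\Omega_T;\C)$ underlying property~($S^*2$) -- i.e.\ that the Schrödinger equation with classical data and a zero terminal/initial condition always admits a classical solution -- together with the routine but necessary check that this density is not destroyed by passing to the completion $\V$. Everything else is a mechanical consequence of the isometry identity $\|S^*v\|_\U=\|v\|_\V$.
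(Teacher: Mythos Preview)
Your proof is correct and follows essentially the same route as the paper: verify \eqref{C1}--\eqref{C3} of the Ne\v{c}as theorem, with the key input being that $S^*\colon\V\to\U$ is an isometry with dense (hence full) range. The only organizational difference is that you isolate the \emph{surjective isometry} of $S^*$ as a preliminary lemma and then derive \eqref{C2} and \eqref{C3} as one-line consequences, whereas the paper leaves this implicit: for \eqref{C2} it invokes the supremizer $s_u\in\V$ via Riesz representation in $\V$ and reads off $S^*s_u=u$ (which tacitly uses density of $R(S^*)$ in $\U$), and for \eqref{C3} it passes through an approximating sequence $v_n\in D(S_\circ^*)$ to manufacture $u_v=S^*v$---a detour your direct choice $u:=S^*v$ avoids. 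Your packaging is slightly cleaner, but the mathematical content is the same.
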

\begin{proof}
	We are going to show the conditions (C.1)-(C.3) of Theorem \ref{thm:Necas} above.\\
	(C.1) \emph{Boundedness}: Let $u\in\U$, $ v\in\V$. By Cauchy-Schwarz' inequality and assumption, we get $\|S^* v\|_\U = \| \mathrm{i}\partial_t v + \textstyle{\frac12} \Delta_x v - \cV v\|_\U \le \| \partial_t v\|_\U + \textstyle{\frac12} \| \Delta_x v\|_\U + \|\cV\|_{L_\infty} \| v\|_\U \le C \| u\|_{H^{1,2}(\Omega_T)} <\infty$.\footnote{We abbreviate $H^{s,k}(\Omega_T):= H^s(I; H^k(\Omega)) \cong H^s(I)\otimes H^k(\Omega)$.}  
	Again by Cauchy-Schwarz' inequality, we have
	\begin{align*}
		|b(u, v)|
		&= |(u, \mathrm{i}\partial_t v + \textstyle{\frac12} \Delta_x v - \cV v)_\U|
		= |(u, S^*v)_\U|
		\le \| u\|_\U \, \| S^* v\|_\U
		= \|u\|_\U\, \| v\|_{\V},
	\end{align*}
	i.e., the continuity constant is unity. \\
	(C.2) \emph{Inf-sup}: Let $0\ne u\in\U$ be given. We consider the supremizer $s_u\in \V$ defined as $(s_u,  v)_{\V} = b(u, v) = (u, S^*v)_\U$ for all $ v\in \V$. Since  by definition of the inner product 
	$
	(s_u,  v)_{\V} 
	= ( S^* s_u, S^*v)_\U 
	$ 
	for all $ v\in \V$ we get $S^* s_u=u$ in $\U$. Then, by \eqref{eq:supremizer2}, 
	\begin{align*}
	\sup_{ v\in\V} \frac{|b(u, v)|}{\|  v\|_{\V}} 
	&= \sup_{ v\in\V} \frac{|(s_u, v)_{\V}|}{\|  v\|_{\V}} 
	= \| s_u\|_{\V}
	= \| S^* s_u\|_\U 
	= \| u\|_\U,
	\end{align*}
	i.e., $\beta=1$ for the inf-sup constant. \\
	(C.3) \emph{Surjecitivity}: Let $0\ne v\in\V$ be given. Then, there is a sequence $(v_n)_{n\in\N}\subset C^{1,2}_{\{T\},\Gamma}(\Omega_T)$ with $v_n\not=0$, converging towards $v$ in $\V$. Since $S_\circ^*$ is an isomorphism of $C^{1,2}_{\{T\},\Gamma}(\Omega_T)$ onto $C([0,T];H)$, there is a unique $u_n:=S^*_\circ v_n\in C([0,T];H)$. Hence $0\not= \| u_n\|_{C([0,T];H)}$. Possibly by taking a subsequence,  $(u_n)_{n\in\N}$ converges to a unique limit $u_v\in L_2(I;H)$. 
	We take the limit as $n\to\infty$ on both sides of $u_n= S^* v_n$ and obtain $0\ne u_v=S^*v\in L_2(I;H)=\U$. 
	Finally, $|b(u_v,v)| = |(u_v, S^*v)_\U| = |(u_v, u_v)_\U| = \| u_v\|_\U^2> 0$, which proves surjectivity and concludes the proof.
\end{proof}

\begin{remark}
	The essence of the above proof is the fact that $\U$ and $\V$ are related as $\U=S^*(\V)$ and noting that $S$ is self-adjoint up to initial versus terminal conditions.
\end{remark}

\begin{remark}\label{Rem:VVcirc}
	For later reference, we note, that
	\begin{align}\label{eq:Vcirc}
		\V \subseteq H^1_{\{T\}}(I;\C) \otimes [H^1_0(\Omega;\C)\cap H^2(\Omega;\C)]
			=: \V_\circ.
	\end{align}
	Moreover, for $v\in\V$, we have
	\begin{align*}
		\| v\|_\V
		&= \| S^* v\|_\U = \| \mathrm{i}\partial_t v + \textstyle{\frac12} \Delta_x v - \cV v\|_\U \\
		&\le \| \partial_t v\|_\U + \textstyle{\frac12} \| \Delta_x v\|_\U + \|\cV\|_{L_\infty} \| v\|_\U 
		\le C \| u\|_{H^{1,2}(\Omega_T)} =: C \| u\|_{\V_\circ}.
	\end{align*}
\end{remark}

\begin{corollary}
	If the unique solution $u^*\in\U$ is in $C^{1,2}(\Omega_T;\C)$, then it satisfies \eqref{eq:SchroedingerEquation} pointwise.
\end{corollary}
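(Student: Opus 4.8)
The plan is to exploit the assumed smoothness of $u^*$ to integrate by parts in the identity $b(u^*,v)=g(v)$, moving the Schrödinger operator from the test function back onto $u^*$, and then to read off the three conditions in \eqref{eq:SchroedingerEquation} by specializing $v\in D(S_\circ^*)$. Here $C^{1,2}(\Omega_T;\C)$ is understood with the regularity holding up to the closure $\overline\Omega_T$, so that the traces of $u^*$ on $I\times\Gamma$ and at $t\in\{0,T\}$ are defined.

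First I would record the integration-by-parts identity: for every $v\in D(S_\circ^*)=C^{1,2}_{\{T\},\Gamma}(\Omega_T;\C)$,
\begin{equation*}
  b(u^*,v)=(S_\circ u^*,v)_\H+\mathrm{i}\,(u^*(0),v(0))_H+\textstyle{\frac12}\int_0^T\big(u^*(t),\partial_n v(t)\big)_{L_2(\Gamma)}\,dt ,
\end{equation*}
where $S_\circ u^*:=\mathrm{i}\,\partial_t u^*+\textstyle{\frac12}\Delta_x u^*-\cV u^*\in C(\overline\Omega_T;\C)$. This is one integration by parts in $t$ (only the $t=0$ term survives, since $v(T)=0$) together with Green's formula in $x$ (only the term carrying the trace of $u^*$ survives, since $v|_{I\times\Gamma}=0$); because it is $v$, and not $u^*$, that carries the homogeneous Dirichlet condition on $\Gamma$, no derivative of $u^*$ appears on the boundary.

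I would then localize in three steps, combining this identity with $b(u^*,v)=g(v)$ and the definition \eqref{eq:veryweak:2} of $g(\cdot)$. Step~1: testing against bump functions $v\in C_0^\infty(\Omega_T;\C)$ (which lie in $D(S_\circ^*)$, being smooth and vanishing near $\{t=T\}$ and $I\times\Gamma$) annihilates the initial- and boundary-trace terms, leaving $(S_\circ u^*-g,v)_\H=0$; density of these $v$ in $\H$ and continuity of $S_\circ u^*$ give $S_\circ u^*=g$ pointwise in $\Omega_T$ (in particular, the hypothesis forces $g$ to coincide with this continuous function), which is the first line of \eqref{eq:SchroedingerEquation}. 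Step~2: with the interior residual removed, the identity reads $\mathrm{i}\,(u^*(0)-u_0,v(0))_H+\textstyle{\frac12}\int_0^T(u^*(t),\partial_n v(t))_{L_2(\Gamma)}\,dt=0$ for all $v\in D(S_\circ^*)$; choosing $v(t,x)=\theta(t)\phi(x)$ with $\theta\in C^1(\bar{I})$, $\theta(0)=1$, $\theta(T)=0$ and $\phi\in C_0^\infty(\Omega;\C)$ kills the boundary term and lets $v(0)=\phi$ range over a dense subset of $H$, whence $u^*(0)=u_0$, the third line of \eqref{eq:SchroedingerEquation}. Step~3: what remains is $\int_0^T(u^*(t),\partial_n v(t))_{L_2(\Gamma)}\,dt=0$ for all $v\in D(S_\circ^*)$; since the normal traces $\partial_n v|_{I\times\Gamma}$ of such $v$ form a dense subset of $L_2(I\times\Gamma)$ (a standard collar-neighbourhood lifting, arranged so that $v|_{I\times\Gamma}=0$ and $v(T,\cdot)=0$), this forces $u^*|_{I\times\Gamma}=0$, the second line of \eqref{eq:SchroedingerEquation}. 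The three conditions together say precisely that $u^*$ solves \eqref{eq:SchroedingerEquation} pointwise.

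The only genuinely delicate point is Step~3: one must recognize that the homogeneous Dirichlet condition on $u^*$ is the \emph{natural} boundary condition of the ultra-weak formulation, recovered by varying the \emph{Neumann} trace of the test function, and justify the density of those traces. A secondary subtlety is the order of operations: the initial- and boundary-trace terms can only be isolated once the interior equation has been extracted, so Steps~1--3 must be carried out in that sequence; and one should acknowledge the regularity of $g$ that is already implicit in the hypothesis $u^*\in C^{1,2}(\Omega_T;\C)$, rather than assuming it separately.
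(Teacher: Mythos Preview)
Your argument is correct and follows the same route as the paper, namely reverting the integration by parts that led to the ultra-weak form; the paper's own proof is a single sentence to that effect. Your version is considerably more careful: in particular, you explicitly isolate the recovery of the homogeneous Dirichlet boundary condition as a \emph{natural} condition of the ultra-weak formulation, obtained by varying the Neumann trace of the test function (your Step~3), which is a point the paper's one-line proof leaves entirely implicit.
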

\begin{proof}
	The proof is immediate by reverting the integration by parts.
\end{proof}

\section{Petrov-Galerkin Discretization}
\label{Sec:3}
We determine a numerical approximation to the solution of a variational problem of the general form \eqref{eq:VarFormGen}.  To this end, one chooses finite-dimensional trial and test spaces, $\U_\delta\subset\U$, $\V_\delta\subset\V$, respectively, where $\delta$ is a discretization parameter to be explained later.  For convenience, we assume that their dimension is equal, i.e., $\cN_\delta:=\dim\U_\delta=\dim\V_\delta$. The Petrov-Galerkin method then reads
\begin{align}\label{eq:var-disc}
	\text{find } u_\delta\in\U_\delta:\quad
	b(u_\delta,v_\delta) =   g(v_\delta)
	\quad\text{for all } v_\delta\in \V_\delta.
\end{align}
As opposed to the coercive case, the well-posedness of \eqref{eq:var-disc} is not inherited from that of \eqref{eq:VarFormGen}. In fact, in order to ensure uniform stability (i.e., stability independent of the discretization parameter $\delta$), the spaces $\U_\delta$ and $\V_\delta$ need to be appropriately chosen in the sense that the discrete inf-sup (or LBB -- Ladyshenskaja-Babu\v{s}ka-Brezzi) condition holds, i.e., there exists a $\beta_\circ>0$ such that
\begin{align}\label{eq:LBB}
	\beta_\delta
	&:=\inf_{u_\delta\in\U_\delta} \sup_{v_\delta\in\V_\delta} 
		\frac{|b(u_\delta,v_\delta)|}{\| u_\delta\|_\U\, \| v_\delta\|_\V}
	\ge \beta_\circ >0,
\end{align}
where the crucial point is that $\beta_\circ$ is independent of $\delta$. The size of $\beta_\circ$ is also relevant for the error analysis, since the Xu-Zikatanov lemma \cite{MR1971217} yields a best approximation result 
\begin{align}\label{eq:bestApprox}
	\| u^*-u^*_\delta\|_\U \le \frac{1}{\beta_\circ} \inf_{w_\delta\in\U_\delta}\| u^*-w_\delta\|_\U
\end{align}
for the \enquote{exact} solution $u^*$ of \eqref{eq:VarFormGen} and the \enquote{discrete}  solution $u_\delta^*$ of \eqref{eq:var-disc}. This is also the key for an optimal error/residual relation, which is important for a posteriori error analysis (also within the reduced basis method).

\subsection{A stable Petrov-Galerkin space-time discretization}
\label{sec:StablePG}
To properly discretize $\V$,
we consider the tensor product subspace $\V_\circ \subset \V$ introduced in \eqref{eq:Vcirc} which allows for a straightforward finite element discretization.\footnote{Of course, one could also use fully unstructured discretizations in space and time as opposed to a tensor product structure, \cite{Steinbach}. However, for efficiency of the solution of the arising algebraic systems, the tensor product structure turned out to be very usefull.} Hence, we look for a pair $\U_\delta\subset\U$ and $\V_\delta\subset\V_\circ$ satisfying \eqref{eq:LBB} with a possibly large inf-sup lower bound $\beta_\circ$, i.e., close to unity. Constructing such a stable pair of trial and test spaces is again a nontrivial task, not only for the Schrödinger equation. It is a common approach to choose some trial approximation space $\U_\delta$ (e.g.\ by splines) and then (try to) construct an appropriate according test space $\V_\delta$ in such a way that \eqref{eq:LBB} is satisfied. This can be done, e.g., by computing the supremizers for all basis functions in $\U_\delta$ and then define $\V_\delta$ as the linear span of these supremizers. However, this would amount to solve the original problem $\cN_\delta$ times, which is way too costly. We mention that this approach indeed works within the discontinuous Galerkin (dG) method, see, e.g., \cite{BTDeGh13,DemGop11}.
We will follow a different path, also used in \cite{BrunkenSmetanaUrban} for transport problems and in \cite{henning2021weak} for the wave equation. We first construct a test space $\V_\delta$ by a standard approach and then define a stable trial  space $\U_\delta$ in a second step. This implies that the trial functions are no longer `simple' splines but they arise from the application of the adjoint operator $S^*$ (which is here the same as the primal one except for initial/terminal conditions) to the test basis functions.

\subsubsection*{Finite elements in time.} 
We start with the temporal discretization. 
We choose some integer ${N_t}>1$ and set $\Dt:=T/{N_t}$. This results in a temporal \enquote{triangulation} 
\begin{align*}
	\cT_{\Dt}^\text{time}\equiv\{ t^{k-1}\equiv(k-1)\Delta t < t \le k\, \Dt \equiv t^k, 1\le k\le {N_t}\}
\end{align*}
in time. Then, we set 
\begin{align}\label{eq:RDt}
	R_\Dt := \Span\{ \varrho^1,...,\varrho^{{N_t}}\}\subset H^1_{\{T\}}(I;\R), 
\end{align}	
e.g.\ piecewise linear splines on $\cT_{\Dt}^\text{time}$ with standard modification at the right end point of $\bar I=[0,T]$. More general, we use a spline basis of degree $p_{\text{time}}\ge 1$. 

\subsubsection*{Discretization in space.} 
For the space discretization, we  choose any conformal finite element space
\begin{align}\label{eq:Zh}
	Z_h :=\Span\{ \phi_1,...,\phi_{{N_h}}\}\subset H^1_0(\Omega;\R)\cap H^2(\Omega;\R),
\end{align}
e.g.\ piecewise quadratic finite elements with homogeneous Dirichlet boundary conditions. For later reference, we choose a spline basis of degree $p_{\text{space}}\ge 2$.

In any case, we assume to have standard spline approximation results of the following form available
\begin{subequations}\label{eq:SplineApprox}
\begin{align}
	\inf_{r_\Dt\in R_\Dt} \| r-r_\Dt\|_{H^r(I)}
	&\le (\Dt)^{\ell+1-r}\, |r|_{H^{\ell+1}(I)},
	&& 0\le r\le\ell\le p_{\text{time}}, \\
	\inf_{z_h\in Z_h} \| z-z_h\|_{H^r(\Omega)}
	&\le h^{\ell+1-r}\, |z|_{H^{\ell+1}(\Omega)}, 	
	&& 0\le r\le\ell\le p_{\text{space}}. 
\end{align}
\end{subequations}

\subsubsection*{Complex-valued test and trial space in space and time.} 
Then, we define the test space as
\begin{align}\label{eq:Vdelta}
	\V_\delta &:= \Span\nolimits_\C(R_\Dt\otimes Z_h) \subset \V_\circ \subset \V, 
	\qquad \delta=(\Dt,h), \\
	&= \Span\nolimits_\C\{ \varphi_\nu := \varrho^k\otimes \phi_i:\, k=1,...,N_t,\, i=1,...,N_h, \nu=(k,i)\}
	\nonumber \\
	&=\bigg\{
		\sum_{\nu=(k,i)=(1,1)}^{(N_t,N_h)} c_\nu\, \varphi_\nu:\, c_\nu\in\C
		\bigg\}, \nonumber
\end{align}
which is a complex-valued tensor product space of dimension $\cN_\delta = N_t\, N_h$. Moreover, by \eqref{eq:SplineApprox} and Remark \ref{Rem:VVcirc}
\begin{align}\label{eq:ErrorOrder}
	\inf_{v_\delta\in\V_\delta} \| v-v_\delta\|_\V
	&\le C \inf_{v_\delta\in\V_\delta} \| v-v_\delta\|_{\V_\circ}
	= \cO ((\Dt)^k +h^{m-1}),
\end{align}
provided that $v\in H^{k+1,m+1}(\Omega_T;\R)$, $1\le k\le p_{\text{time}}-1$, $1\le m\le p_{\text{space}}-1$.\footnote{Use $r=1$, $\ell=k$ in time and $r=2$, $\ell=$ in space in \eqref{eq:SplineApprox}.}

The trial space $\U_\delta$ is constructed by applying the adjoint operator $S^*$ to each test basis function, i.e., for $\mu=(\ell,j)$ and $A=-\Delta$
\allowdisplaybreaks
\begin{align*}
	\psi_\mu 
	&:= S^*(\varphi_\mu) 
	=  S^*(\varrho^\ell\otimes \phi_j) 
	= (\mathrm{i}\partial_{t} -\textstyle{\frac12}\Delta_x + \cV)(\varrho^\ell\otimes \phi_j),
\end{align*}
i.e., $\U_\delta := S^*(\V_\delta) = \Span\{ \psi_\mu:\, \nu=1,...,\cN_\delta\}$. Since $S^*$ is an isomorphism of $\V$ onto $\U$, the functions $\psi_\nu$ are in fact linearly independent. A sample is shown in Figure \ref{Fig:Sample}.

\begin{center}
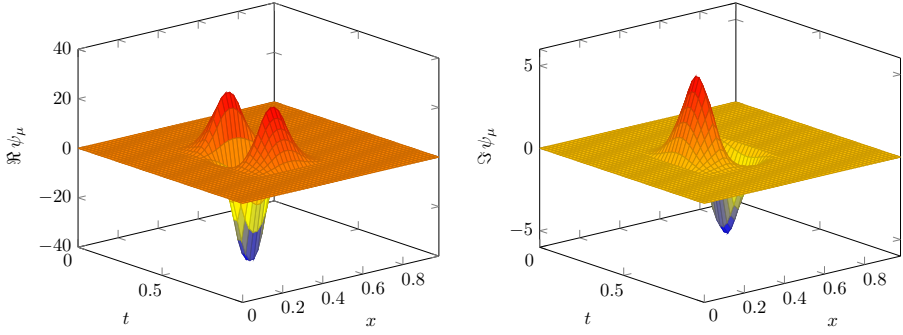
\begin{figure}[h!]
	\begin{tikzpicture}[scale = 0.7]
	  \begin{axis}[view={50}{20}, zmin = -40, zmax = 40, xlabel={$t$}, ylabel={$x$}, zlabel={$\Re\, \psi_{\mu}$}]
		  	\addplot3[surf] table [x=y, y=x,  z=z_real] {results/valuesTrialFunction_32.txt};
		  \end{axis}
	\end{tikzpicture} 
	  \hspace*{2mm}
	\begin{tikzpicture}[scale = 0.7]
	  \begin{axis}[view={50}{20}, zmin = -6, zmax = 6, xlabel={$t$}, ylabel={$x$}, zlabel={$\Im\, \psi_{\mu}$}]
		  	\addplot3[surf] table [x=y, y=x,  z=z_imag] {results/valuesTrialFunction_32.txt};
		  \end{axis}
	\end{tikzpicture}
\caption{Trial function for $I = \Omega = (0,1)$ (left: real part, right: imaginary part).}
\label{Fig:Sample}
\end{figure}
\end{center}

\begin{proposition}\label{Lem:LBB}
 	For the space $\V_\delta$ defined in \eqref{eq:Vdelta} and $\U_\delta:=S^*(\V_\delta)$, we have
	\begin{align*}
	\beta_\delta := 
	\inf_{u_\delta\in\U_\delta} \sup_{v_\delta\in\V_\delta} 
		\frac{|b(u_\delta,v_\delta)|}{\| u_\delta\|_\U\, \| v_\delta\|_\V} = 1.
\end{align*}
In particular, the Petrov-Galerkin problem \eqref{eq:var-disc} admits a unique solution $u_{\delta} \in \U_\delta$. 
\end{proposition}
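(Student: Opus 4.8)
The plan is to mimic, at the discrete level, the proof of Theorem~\ref{Thm:stablecont}, exploiting the fact that the trial space $\U_\delta$ is constructed precisely so that the supremizer of \emph{every} trial function already lies in $\V_\delta$. Recall from the construction preceding Theorem~\ref{Thm:stablecont} (and the paragraph after \eqref{eq:Vdelta}) that $S^*:\V\to\U$ is an isometric isomorphism, i.e., $\|S^*v\|_\U=\|v\|_\V$ for all $v\in\V$. In particular $S^*$ is injective on $\V_\delta\subset\V$, so $\dim\U_\delta=\dim\V_\delta=\cN_\delta$, and every $u_\delta\in\U_\delta$ can be written uniquely as $u_\delta=S^*w_\delta$ with $w_\delta\in\V_\delta$ and $\|u_\delta\|_\U=\|w_\delta\|_\V$.

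First I would fix $0\neq u_\delta\in\U_\delta$, write $u_\delta=S^*w_\delta$ with $w_\delta\in\V_\delta$, and compute, for an arbitrary $v_\delta\in\V_\delta$, using the definition \eqref{eq:veryweak:1} of $b$ and the definition \eqref{Def:V} of $(\cdot,\cdot)_\V$,
\begin{align*}
	b(u_\delta,v_\delta)=(u_\delta,S^*v_\delta)_\U=(S^*w_\delta,S^*v_\delta)_\U=(w_\delta,v_\delta)_\V.
\end{align*}
Hence $w_\delta\in\V_\delta$ is itself the \emph{discrete} supremizer of $u_\delta$, and the elementary identity \eqref{eq:supremizer2}, applied in the finite-dimensional space $\V_\delta$, gives
\begin{align*}
	\sup_{v_\delta\in\V_\delta}\frac{|b(u_\delta,v_\delta)|}{\|v_\delta\|_\V}
	=\sup_{v_\delta\in\V_\delta}\frac{|(w_\delta,v_\delta)_\V|}{\|v_\delta\|_\V}
	=\|w_\delta\|_\V=\|S^*w_\delta\|_\U=\|u_\delta\|_\U.
\end{align*}
Dividing by $\|u_\delta\|_\U$ and taking the infimum over $0\neq u_\delta\in\U_\delta$ yields $\beta_\delta=1$.

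For the final claim I would note that \eqref{eq:var-disc} is a square linear system of dimension $\cN_\delta$, and that $\beta_\delta=1>0$ forces the linear map $u_\delta\mapsto b(u_\delta,\cdot)|_{\V_\delta}$ from $\U_\delta$ to $\V_\delta'$ to be injective; a dimension count then makes it bijective, so \eqref{eq:var-disc} has a unique solution $u_\delta\in\U_\delta$.

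I do not expect a genuine obstacle here. The only points requiring brief care are (i) that $S^*$ restricted to $\V_\delta$ is injective with $\|S^*v_\delta\|_\U=\|v_\delta\|_\V$, which is inherited directly from the continuous isometry and guarantees both that $\dim\U_\delta=\cN_\delta$ and that the supremizer computation above is \emph{exact} rather than merely an inequality, and (ii) that the supremum over the finite-dimensional $\V_\delta$ is attained, which is automatic. As noted in the remark following Theorem~\ref{Thm:stablecont}, the essential mechanism is simply that the discrete pair inherits the relation $\U_\delta=S^*(\V_\delta)$ from $\U=S^*(\V)$, so optimal stability transfers verbatim.
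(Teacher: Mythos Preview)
Your proof is correct and follows essentially the same approach as the paper. The paper obtains the lower bound $\beta_\delta\ge 1$ by testing with the specific $z_\delta\in\V_\delta$ satisfying $S^*z_\delta=u_\delta$ and the upper bound $\beta_\delta\le 1$ by Cauchy--Schwarz, whereas you package both into the single supremizer identity $\sup_{v_\delta}|(w_\delta,v_\delta)_\V|/\|v_\delta\|_\V=\|w_\delta\|_\V$; this is the same argument, just more compactly written.
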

\begin{proof}
	Let $0\ne u_\delta\in\U_\delta\subset\U$. Then, since $\U_\delta=S^*(\V_\delta)$ there exists a unique $z_\delta\in\V_\delta$ such that $S^*z_\delta=u_\delta$. Hence
\allowdisplaybreaks
	\begin{align*}
		\sup_{v_\delta\in\V_\delta}  \frac{|b(u_\delta,v_\delta)|}{\| u_\delta\|_\U\, \| v_\delta\|_\V}
		&\ge \frac{|b(u_\delta,z_\delta)|}{\| u_\delta\|_\U\, \| z_\delta\|_\V}
		= \frac{|(u_\delta,S^*z_\delta)_\U|}{\| u_\delta\|_\U\, \| z_\delta\|_\V}
		= \frac{|(u_\delta,u_\delta)_\U|}{\| u_\delta\|_\U\, \| B^*z_\delta\|_\U} \\
		&=\frac{ \|u_\delta\|^2_\U}{\| u_\delta\|_\U\, \| u_\delta\|_\U} = 1.
	\end{align*}
	On the other hand, by the Cauchy-Schwarz inequality, we have
\allowdisplaybreaks
	\begin{align*}
		\sup_{v_\delta\in\V_\delta}  \frac{|b(u_\delta,v_\delta)|}{\| u_\delta\|_\U\, \| v_\delta\|_\V}
		&= \sup_{v_\delta\in\V_\delta}  \frac{|(u_\delta, S^*v_\delta)_\U|}{\| u_\delta\|_\U\, \| v_\delta\|_\V}
		\le \sup_{v_\delta\in\V_\delta}  \frac{ \| u_\delta\|_\U\, \|S^*v_\delta\|_\U}{\| u_\delta\|_\U\, \| S^*v_\delta\|_\U} = 1,
	\end{align*}
	which proves the claim.
\end{proof}

\subsection{Approximation properties}
\label{sec:ApproximationProperties}

In view of \eqref{eq:bestApprox} and Proposition \ref{Lem:LBB} we can further detail the error as follows. Denoting by $v^*\in\V$ the unique element such that $S^*v^*=u^*$, we get by \eqref{eq:ErrorOrder} that 
\begin{align*}
	\| u^*-u^*_\delta\|_\U 
	&= \inf_{u_\delta\in\U_\delta}\| u^*-u_\delta\|_\U
	=  \inf_{v_\delta\in\V_\delta}\| S^*v^*-S^*v_\delta\|_\U \\
	&= \inf_{v_\delta\in\V_\delta}\| v^*- v_\delta\|_\V
	= \cO ((\Dt)^{k} +h^{m-1}),
\end{align*}
provided that $u^*\in H^{k+1,m+1}(\Omega_T;\C)$, $1\le k\le p_{\text{time}}-1$, $1\le m\le p_{\text{space}}-1$ and $\U_\delta\subset H^{k+1,m+1}(\Omega_T;\C)$ (which can be realized by choosing sufficiently smooth test functions).\footnote{This is achieved by using \eqref{eq:SplineApprox} with $r=1$, $\ell=k$ in time and $r=2$, $\ell=m$ in space.} 
This means that we obtain a desired rate of approximation by choosing the test space appropriately (as long as the solution $u^*$ is sufficiently smooth).

\subsection{Norm-preservation}
The absolute squared value of the wave function (its energy) at a given time is interpreted in quantum mechanics as a probability measure for finding a particle in a given place. Therefore, the energy should be a probability measure for all times, which is termed \emph{norm-preserving property}.

\begin{definition}\label{Def:NormPreserving}
	A function $\psi\in C(\overline{I}; H)$ is called \emph{norm-preserving}, if $\Vert \psi(t) \Vert _H = \Vert \psi(0) \Vert_H$ for all $t \in [0,T]$.
\end{definition}

\begin{remark}
	The solution $u^*$ and the discrete solution $u_\delta^*$, respectively, can only be norm-preserving if $g \equiv 0$ in \eqref{eq:SchroedingerEquation}. In fact, $g$ acts as an external force, so that $g \not\equiv 0$ would add additional energy.
\end{remark}

Since norm-preservation is a crucial property from an application point of view, we will discuss it within the space-time framework, both in the infinite and the finite-dimensional case. The space-time framework involves \emph{integration over time}, so that \emph{pointwise} properties like norm-preservation in the above sense is by no means obvious. 

This also holds in particular for a Petrov-Galerkin discretization. A more standard discretization is to first discretize in space (by a suitable method of choice), so that it remains to choose a time-stepping scheme. Those schemes can be constructed in such a way that they are energy-preserving, e.g.\ \cite{WDoerflerSchroedinger, GabrieleDissertation}. A Petrov-Galerkin method in space and time does not need to be a time-marching scheme. Hence, preservation from one time step to the next one cannot be an issue.

For the subsequent analysis, we need some assumptions, which we collect for brevity.

\begin{assumption}
\label{assumption:UnitaryTimeEvolution}
\begin{compactenum}[({A}.1)] 
	\item Let $g \equiv 0$.
	\item Assume that the initial condition is smooth, i.e., $u_0 \in V$.
	\item The potential $\mathcal{V}$ is assumed to have the following properties: it is 
	\begin{compactenum}
		\item \emph{uniformly bounded}, i.e., $\|\mathcal{V}(t)\phi\|_H \le C_{\mathcal{V}}\|\phi\|_H$ for all $\phi\in H$, $C_{\mathcal{V}}<\infty$;
		\item \emph{uniformly positive}, i.e., $(\cV(t)\phi,\phi)_H\ge \alpha_{\mathcal{V}}\|\phi\|_H^2$ for all $\phi\in H$, $\alpha_{\mathcal{V}}>0$;
		\item \emph{hermitian}, i.e., $(\cV(t)\phi,\varphi)_H = (\phi,\cV(t)\varphi)_H$ for all $\phi,\varphi\in H$;
		\item \emph{differentiable} w.r.t.\ $t$.
	\end{compactenum}
\end{compactenum}
\end{assumption}

We note that Assumption \ref{assumption:UnitaryTimeEvolution}(A.3) implies that $a_t(\phi,\phi)\in\R$ for all $\phi\in V$, which will be crucial in the sequel.

\begin{remark}
	Assumption \ref{assumption:UnitaryTimeEvolution}(A.3) implies that  the sesquilinear form $a_t$ defined in \eqref{eq:at} is hermitian, bounded and coercive on $V$ as well as weakly differentiable in time with bounded time-derivative $\dot{a}_t(\cdot, \cdot)$. 
\end{remark}

\subsubsection{Continuous case}
\label{sec:UnitaryTimeEvolutionCont}
Using Assumption \ref{assumption:UnitaryTimeEvolution}, it is well-known that the following variational problem admits a unique solution: find $u \in L_2(I;V) \cap H^1(I;V') \hookrightarrow C(\overline{I}; H)$ satisfying  $u(0) = u_0$ in $H$ such that
\begin{align}
\label{eq:UnitaryTimeEvolCont1}
	\int_0^T\!\!\! \{ \mathrm{i} \langle \partial_t u(t), v_1(t) \rangle_{V' \times V} \!-\!  a_t(u(t), v_1(t)) \} dt  
	+ \mathrm{i} \langle u(0),v_2 \rangle_{V \times V'} = \mathrm{i} \langle u_0,v_2 \rangle_{V \times V'}
	\hskip-4pt
\end{align}%
for  all test functions $v =(v_1,v_2)\in L_2(I;V) \times V'$, see e.g. \cite{DautrayLions, Hain}. Moreover, the solution $u$ is norm-preserving, which can easily be seen by choosing $v_1 = u$ and $v_2 = - \mathrm{i} u(0)$ in \eqref{eq:UnitaryTimeEvolCont1} and considering the imaginary part, which yields
\begin{align}
\label{eq:UnitaryTimeEvolCont2}
\int_0^T{ \frac{d}{dt} \Vert u(t) \Vert_H^2 \; dt} = \Vert u(T) \Vert_H^2 - \Vert u(0) \Vert_H^2 = 0,
\end{align}
which implies $\Vert u(T) \Vert_H = \Vert u(0) \Vert_H$.  Since \eqref{eq:UnitaryTimeEvolCont2} is valid for any $T>0$ we obtain the norm-preserving property of the analytical solution.  The norm-preserving property of the ultra-weak solution follows immediately by the following proposition:

\begin{proposition}
\label{prop:UnitaryTimeEvol1}
Let Assumption \ref{assumption:UnitaryTimeEvolution} be satisfied and assume that the weak solution of \eqref{eq:UnitaryTimeEvolCont1} is $u^* \in L_2(I;V) \cap H^1(I;V')$. Then $u^*$ is the (unique) weak solution of 
\begin{align}
\label{eq:UnitaryTimeEvolCont3}
(u, S^* v)_\U = \mathrm{i} (u_0, v(0))_{H}, \qquad \forall v \in \V.
\end{align}

\end{proposition}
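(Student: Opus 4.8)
The plan is to show that the weak solution $u^*$ of \eqref{eq:UnitaryTimeEvolCont1} satisfies the ultra-weak identity \eqref{eq:UnitaryTimeEvolCont3}, and then invoke Theorem \ref{Thm:stablecont} (which gives existence and uniqueness for \eqref{eq:UnitaryTimeEvolCont3} with $g\equiv 0$) to conclude that $u^*$ is \emph{the} unique ultra-weak solution. Since by Assumption \ref{assumption:UnitaryTimeEvolution} we have $u_0\in V$ and the potential is nice, $u^*\in L_2(I;V)\cap H^1(I;V')\hookrightarrow C(\overline I;H)$, so $u^*(0)=u_0$ makes sense in $H$, and all the integrations by parts below are legitimate.

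First I would take an arbitrary $v\in\V$. By Remark \ref{Rem:VVcirc} we know $\V\subseteq\V_\circ = H^1_{\{T\}}(I;\C)\otimes[H^1_0(\Omega;\C)\cap H^2(\Omega;\C)]$, so in particular $v\in H^1(I;V')$-type regularity is available, $v(T)=0$, and $S^*v = \mathrm{i}\,\partial_t v + \tfrac12\Delta_x v - \cV v\in\U=\H$. I would compute $(u^*,S^*v)_\U$ by undoing the integration by parts that motivated the ultra-weak form in the first place: since $u^*\in L_2(I;V)$ and $v\in L_2(I;V)$ spatially, $(u^*, \tfrac12\Delta_x v - \cV v)_\H = -\int_0^T a_t(u^*(t),v(t))\,dt$ after integrating by parts in space (the boundary terms vanish because $v\in H^1_0(\Omega)$), using that $\cV$ is real-valued and hermitian. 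For the time term, $(u^*,\mathrm{i}\,\partial_t v)_\H = \overline{\mathrm{i}}\int_0^T(u^*(t),\partial_t v(t))_H\,dt$; integrating by parts in time and using $v(T)=0$ gives a boundary term $-\overline{\mathrm{i}}\,(u^*(0),v(0))_H = \mathrm{i}\,(u_0,v(0))_H$ plus the bulk term $\mathrm{i}\int_0^T\langle\partial_t u^*(t),v(t)\rangle_{V'\times V}\,dt$ — here I need to be slightly careful about the direction of the complex conjugate and about the sign, which is the place most prone to error. Collecting, $(u^*,S^*v)_\U = \mathrm{i}(u_0,v(0))_H + \int_0^T\{\mathrm{i}\langle\partial_t u^*(t),v(t)\rangle_{V'\times V} - a_t(u^*(t),v(t))\}\,dt$.

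Now I would use that $u^*$ solves \eqref{eq:UnitaryTimeEvolCont1}: testing that equation with $v_1 = v\in L_2(I;V)$ and $v_2 = 0\in V'$ yields $\int_0^T\{\mathrm{i}\langle\partial_t u^*(t),v(t)\rangle_{V'\times V} - a_t(u^*(t),v(t))\}\,dt = 0$. Substituting this into the expression above gives $(u^*,S^*v)_\U = \mathrm{i}(u_0,v(0))_H$, which is exactly \eqref{eq:UnitaryTimeEvolCont3}. Since $v\in\V$ was arbitrary, $u^*$ is an ultra-weak solution; by Theorem \ref{Thm:stablecont} the ultra-weak solution is unique, so $u^*$ is \emph{the} weak solution of \eqref{eq:UnitaryTimeEvolCont3}, proving the claim.

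The main obstacle is purely bookkeeping rather than conceptual: getting the complex conjugates and signs right in the temporal integration by parts, and making sure the regularity of $u^*$ (namely $u^*\in H^1(I;V')$, $u^*\in C(\overline I;H)$ with $u^*(0)=u_0$) and of $v$ (namely $v\in H^1_{\{T\}}(I;\cdot)$ so $v(T)=0$) is exactly what is needed to justify the duality pairings $\langle\partial_t u^*(t),v(t)\rangle_{V'\times V}$ and the pointwise trace evaluations at $t=0,T$. No genuinely hard estimate is required; the well-posedness of \eqref{eq:UnitaryTimeEvolCont1} is quoted, and the well-posedness of \eqref{eq:UnitaryTimeEvolCont3} comes from Theorem \ref{Thm:stablecont}.
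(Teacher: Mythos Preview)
Your proof is correct and follows essentially the same route as the paper: integrate by parts in space and time to rewrite $(u^*,S^*v)_\U$, then invoke \eqref{eq:UnitaryTimeEvolCont1} to kill the bulk term. The only cosmetic difference is that the paper keeps the boundary term as $\mathrm{i}(u^*(0),v(0))_H$ and then tests \eqref{eq:UnitaryTimeEvolCont1} with $v_2=v(0)$ (rather than $v_2=0$) to turn it into $\mathrm{i}(u_0,v(0))_H$, whereas you use $u^*(0)=u_0$ directly; both are equivalent.
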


\begin{proof}
Due to the assumed regularity of $u^*$, we can perform integration by parts in \eqref{eq:UnitaryTimeEvolCont1} and obtain
\begin{align*} 
	 \mathrm{i} \int_0^T\!\!{ \langle \partial_t u^*(t), v(t) \rangle_{V' \times V} \; dt} - \int_0^T{ a_t(u^*(t), v(t)) \; dt } 
	+ \mathrm{i} (u^*(0), v(0))_H
	&= (u^*, S^* v)_{\U},
\end{align*}
for all $v \in \V$.  Since $\V \subset L_2(I;V)$, we can use \eqref{eq:UnitaryTimeEvolCont1} with $v_1 = v$ and $v_2 = v(0)$ and obtain $(u^*, S^*v)_{\U} = \mathrm{i} (u_0, v(0))_H$, i.e., \eqref{eq:UnitaryTimeEvolCont3}.
\end{proof}

\begin{corollary}
Under the assumptions of Proposition \ref{prop:UnitaryTimeEvol1} the ultra-weak solution $u^*$ of \eqref{eq:UnitaryTimeEvolCont3} has additional regularity $u^* \in L_2(I;V) \cap H^1(I;V')$ and is norm-preserving. 
\end{corollary}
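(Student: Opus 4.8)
The plan is to identify the ultra-weak solution $u^*$ of \eqref{eq:UnitaryTimeEvolCont3} with the more regular weak solution of \eqref{eq:UnitaryTimeEvolCont1} via uniqueness, and then to read off norm-preservation from the computation \eqref{eq:UnitaryTimeEvolCont2} already carried out for the latter.

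First I would invoke Theorem~\ref{Thm:stablecont} with $g\equiv0$ (admissible by Assumption~\ref{assumption:UnitaryTimeEvolution}(A.1)) to obtain a \emph{unique} ultra-weak solution $u^*\in\U$ of \eqref{eq:UnitaryTimeEvolCont3}. To apply the theorem one needs the right-hand side $v\mapsto\mathrm{i}(u_0,v(0))_H$ to lie in $\V'$; since $u_0\in V\subset H$ by Assumption~\ref{assumption:UnitaryTimeEvolution}(A.2), this amounts to the trace estimate $\|v(0)\|_H\le\sqrt{T}\,\|v\|_\V$. The latter follows on the dense subspace $D(S_\circ^*)$ by the energy argument already used in the proof of Proposition~\ref{proposition:FormalAdjointInjective}: writing $f:=S_\circ^*v$ and testing against $v$, the imaginary part gives $\tfrac{d}{ds}\|v(s)\|_H^2=2\,\Im(f(s),v(s))_H$; integrating over $[0,T]$ with $v(T)=0$ and using Cauchy--Schwarz in time yields $\|v(0)\|_H\le\sqrt{T}\,\|f\|_\U=\sqrt{T}\,\|v\|_\V$, and this extends to all of $\V$ by density.

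Next I would recall that, under Assumption~\ref{assumption:UnitaryTimeEvolution}, problem \eqref{eq:UnitaryTimeEvolCont1} is well-posed with solution $\tilde u\in L_2(I;V)\cap H^1(I;V')$ (as stated in the text, cf.\ \cite{DautrayLions,Hain}); in particular the standing hypothesis of Proposition~\ref{prop:UnitaryTimeEvol1} is automatically satisfied. That proposition then shows $\tilde u$ solves \eqref{eq:UnitaryTimeEvolCont3}, so the uniqueness obtained in the previous step forces $u^*=\tilde u$. Hence $u^*\in L_2(I;V)\cap H^1(I;V')\hookrightarrow C(\overline{I};H)$, which is the asserted additional regularity.

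Finally, norm-preservation is inherited from the weak solution exactly as in \eqref{eq:UnitaryTimeEvolCont2}: testing \eqref{eq:UnitaryTimeEvolCont1} with $v_1=u^*$ and $v_2=-\mathrm{i}\,u^*(0)$, using that $a_t(u^*(t),u^*(t))\in\R$ by Assumption~\ref{assumption:UnitaryTimeEvolution}(A.3) and taking imaginary parts yields $\int_0^T\frac{d}{dt}\|u^*(t)\|_H^2\,dt=0$, i.e.\ $\|u^*(T)\|_H=\|u^*(0)\|_H$; replacing $(0,T)$ by $(0,t)$ for each $t\in[0,T]$ gives $\|u^*(t)\|_H=\|u^*(0)\|_H$ for all $t$, so $u^*$ is norm-preserving in the sense of Definition~\ref{Def:NormPreserving}. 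I expect the only genuinely substantive point to be the trace estimate placing the initial-data functional in $\V'$, which is what makes Theorem~\ref{Thm:stablecont}'s uniqueness available for \eqref{eq:UnitaryTimeEvolCont3}; the remaining steps are bookkeeping.
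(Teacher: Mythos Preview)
Your argument is correct and follows essentially the same route as the paper: obtain the regular weak solution of \eqref{eq:UnitaryTimeEvolCont1} from Assumption~\ref{assumption:UnitaryTimeEvolution}, identify it with the ultra-weak solution via Proposition~\ref{prop:UnitaryTimeEvol1} and the uniqueness from Theorem~\ref{Thm:stablecont}, and then read off norm-preservation from \eqref{eq:UnitaryTimeEvolCont2}. Your additional verification of the trace estimate $\|v(0)\|_H\le\sqrt{T}\,\|v\|_\V$ (ensuring the initial-data functional lies in $\V'$) is a point the paper leaves implicit, so your write-up is in fact slightly more complete.
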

\begin{proof}
	Assumption \ref{assumption:UnitaryTimeEvolution} yields a smooth solution $u^* \in L_2(I;V) \cap H^1(I;V')$ of \eqref{eq:UnitaryTimeEvolCont1}, which by Proposition \ref{prop:UnitaryTimeEvol1} coincides with the unique ultra-weak solution of \eqref{eq:UnitaryTimeEvolCont3}. Hence, we obtain additional regularity and \eqref{eq:UnitaryTimeEvolCont2} yields norm preservation.
\end{proof}

\begin{remark}
We can assume weaker assumptions for $a_t(\cdot, \cdot)$ to obtain a unique solution of \eqref{eq:UnitaryTimeEvolCont1} satisfying the norm-preserving property. In fact, it is sufficient to assume that $a_t(\cdot, \cdot)$ satisfies a G\r{a}rding inequality on $V$, see e.g. \cite{DautrayLions, Hain}. 
\end{remark}

\subsubsection{Discrete case}
\label{sec:UnitaryTimeEvolutionDiscrete}
As mentioned above, we would like also the discrete system to be norm-preserving. For time-stepping schemes, there are several methods ensuring norm-preservation. Note, that also certain space-time Petrov-Galerkin discretizations can be interpreted as a time-stepping method, so that one can investigate norm-preservation. In fact, in \cite{GabrieleDissertation, WDoerflerSchroedinger}, a Crank-Nicolson scheme in time is shown to be norm-preserving -- and in turn a Crank-Nicolson scheme can be shown to be a special case of a Petrov-Galerkin space-time discretization, \cite{r.andreev2013A,PateraUrbanImprovedErrorBound}. 
However, in a more general Petrov-Galerkin space-time discretization, we cannot hope to have an analogous time-stepping scheme, in particular since trial and test space are constructed in order to ensure stability (and not in order to arrive at a time-stepping scheme). 

\medskip
\noindent\textbf{Inf-sup-optimal discretization.}
Let $u_\delta^* \in \U_\delta$ be the unique solution of the ultra-weak Petrov-Galerkin formulation \eqref{eq:var-disc}, where $\U_\delta := S^*(\V_\delta)$. Let Assumption \ref{assumption:UnitaryTimeEvolution} hold, so that $u^* \in L_2(I;V) \cap H^1(I;V')$. Hence, it makes sense to choose $\U_\delta\subset L_2(I;V) \cap H^1(I;V')$, i.e., with extra regularity. This can easily be realized by choosing the test functions $\V_\delta$ with sufficient regularity. Then, we can perform integration by parts as above and obtain for all $v_{\delta}\in\V_\delta$ that
\begin{align*}
\mathrm{i} (u_{0,\delta}, v_{\delta}(0))_{H}
	&=(u_{\delta}^*, S^* v_{\delta})_\U  \\
	&\kern-50pt= \mathrm{i} \int_0^T\!\!{ \langle \partial_t u_{\delta}^*(t), v_{\delta}(t) \rangle_{V' \times V} \; dt} - \int_0^T{ a_t(u_{\delta}^*(t), v_{\delta}(t)) \; dt } + \mathrm{i} (u_{\delta}^*(0), v_{\delta}(0))_H, 
\end{align*}
where $u_{0,\delta} \in \U_{\delta}$ denotes a suitable approximation of the initial condition $u_0 \in V$. Hence, $u_{\delta}^*(0)=u_{0,\delta}$ and by taking real and imaginary parts,
\begin{align}
	\label{eq:UnitaryTimeEvolutionDiscrete1}
	\int_0^T\!\!{ \langle \partial_t u_{\delta}^*(t), v_{\delta}(t) \rangle_{V' \times V} \; dt} 
	= \int_0^T{ a_t(u_{\delta}^*(t), v_{\delta}(t)) \; dt }
	&=0.
\end{align}
However, we cannot deduce norm-preservation from \eqref{eq:UnitaryTimeEvolutionDiscrete1} since $\V_\delta \not \subset \U_\delta$, so that we cannot test \eqref{eq:UnitaryTimeEvolutionDiscrete1} with $u_\delta$ in place of $v_\delta$. 

Moreover, in the minimal regularity case (i.e., if Assumption \ref{assumption:UnitaryTimeEvolution} is not valid), the solution might not be in $C(\bar{I};H)$, so that $\| u_\delta^*(T)\|_H$ might not even be defined. This is only the case, when we choose a sufficiently smooth discretization $\V_\delta$ so that $\U_\delta\subset C(\bar{I};H)$, which can of course be realized by choosing e.g.\ splines of higher order. 

\begin{proposition}\label{prop:UnitaryTimeEvolDisc}
	Let Assumption \ref{assumption:UnitaryTimeEvolution} be satisfied and choose $\V_\delta$ in such a way that $\U_\delta=S^*(\V_\delta)\subset L_2(I;V) \cap H^1(I;V')\cap L_2(I;H^2(\Omega;\C))$. Then, the discrete solution is asymptotically norm-preserving, i.e.,
	\begin{align*}
		 \big\vert \Vert u^*_\delta(T) \Vert_H^2 - \Vert u^*_\delta(0) \Vert_H^2 \big\vert
		= \cO (\Dt +h).
	\end{align*} 
\end{proposition}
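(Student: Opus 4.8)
The plan is to imitate the continuous norm-preservation argument of Subsection~\ref{sec:UnitaryTimeEvolutionCont} — which amounts to testing the equation with the solution itself — and to control the error incurred by the fact that $u^*_\delta\notin\V_\delta$, so that $u^*_\delta$ is \emph{not} an admissible test function.

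\textbf{Step 1 (basic identity and estimate).} By the hypothesis $u^*_\delta\in\U_\delta=S^*(\V_\delta)\subset L_2(I;V)\cap H^1(I;V')\hookrightarrow C(\overline I;H)$, the quantity $\|u^*_\delta(t)\|_H$ is well defined for every $t\in[0,T]$ and $\tfrac{d}{dt}\|u^*_\delta(t)\|_H^2=2\,\mathrm{Re}\,\langle\partial_t u^*_\delta(t),u^*_\delta(t)\rangle_{V'\times V}$ in $L_1(I)$ (Lions--Magenes). Integrating over $I$ and subtracting the Galerkin orthogonality \eqref{eq:UnitaryTimeEvolutionDiscrete1} — namely $\int_0^T\langle\partial_t u^*_\delta(t),v_\delta(t)\rangle_{V'\times V}\,dt=0$ for all $v_\delta\in\V_\delta$, which holds because $\V_\delta$ is the complexification of a real spline space and $\mathcal V$ is hermitian — I obtain, for an \emph{arbitrary} $v_\delta\in\V_\delta$,
\begin{align*}
  \tfrac12\big(\|u^*_\delta(T)\|_H^2-\|u^*_\delta(0)\|_H^2\big)
  =\mathrm{Re}\int_0^T\langle\partial_t u^*_\delta(t),\,u^*_\delta(t)-v_\delta(t)\rangle_{V'\times V}\,dt,
\end{align*}
and hence, by the Cauchy--Schwarz inequality,
\begin{align*}
  \big|\|u^*_\delta(T)\|_H^2-\|u^*_\delta(0)\|_H^2\big|
  \le 2\,\|\partial_t u^*_\delta\|_{L_2(I;V')}\;\inf_{v_\delta\in\V_\delta}\|u^*_\delta-v_\delta\|_{L_2(I;V)}.
\end{align*}

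\textbf{Step 2 (bounding the two factors).} Testing \eqref{eq:var-disc} with $v_\delta\in\V_\delta$ and using $g\equiv 0$ together with the consistency of \eqref{eq:VarFormGen} gives $(u^*_\delta-u^*,w_\delta)_{\H}=0$ for every $w_\delta\in\U_\delta=S^*(\V_\delta)$, i.e., $u^*_\delta$ is the $\H$-orthogonal projection of the exact ultra-weak solution $u^*$ onto $\U_\delta$. Under Assumption~\ref{assumption:UnitaryTimeEvolution} we have $u^*\in L_2(I;V)\cap H^1(I;V')$ (Subsection~\ref{sec:UnitaryTimeEvolutionCont}), and since $\mathrm{i}\partial_t u^*=A(t)u^*$ with $\sup_t\|A(t)\|_{\cL(V,V')}<\infty$ by (A.3), one has $\|\partial_t u^*\|_{L_2(I;V')}\le C\|u^*\|_{L_2(I;V)}<\infty$. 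The smoothness hypothesis $\U_\delta\subset L_2(I;V)\cap H^1(I;V')\cap L_2(I;H^2(\Omega;\C))$ ensures that $\V_\delta$ and $\U_\delta$ consist of sufficiently smooth piecewise polynomials on the space--time mesh $\delta=(\Dt,h)$, so standard inverse estimates are available; combining these with the $\H$-optimality $\|u^*-u^*_\delta\|_{\H}\le\operatorname{dist}_{\H}(u^*,\U_\delta)$ and the spline approximation bounds \eqref{eq:SplineApprox} applied to $u^*$ (which is sufficiently regular under Assumption~\ref{assumption:UnitaryTimeEvolution}), one shows that $\|\partial_t u^*_\delta\|_{L_2(I;V')}$ is bounded uniformly in $\delta$ and that
\begin{align*}
  \inf_{v_\delta\in\V_\delta}\|u^*_\delta-v_\delta\|_{L_2(I;V)}
  \le\|u^*_\delta-u^*\|_{L_2(I;V)}+\inf_{v_\delta\in\V_\delta}\|u^*-v_\delta\|_{L_2(I;V)}
  =\cO(\Dt+h).
\end{align*}
Inserting both bounds into Step~1 yields $\big|\|u^*_\delta(T)\|_H^2-\|u^*_\delta(0)\|_H^2\big|=\cO(\Dt+h)$, which is the assertion.

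\textbf{Where the difficulty lies.} Step~1 is essentially the continuous argument and is routine once \eqref{eq:UnitaryTimeEvolutionDiscrete1} is available. The crux — and the reason that only an \emph{asymptotic} statement is possible — is the strong-norm control of the Petrov--Galerkin solution used in Step~2: $u^*_\delta$ is constructed to be optimal only in the weak norm $\|\cdot\|_{\U}=\|\cdot\|_{\H}$, and passing from $\H$-optimality to an $\cO(\Dt+h)$ rate in $L_2(I;V)$ and $H^1(I;V')$ requires exactly the additional regularity built into $\V_\delta$ (the hypothesis $\U_\delta\subset L_2(I;V)\cap H^1(I;V')\cap L_2(I;H^2(\Omega;\C))$), inverse estimates for the spline spaces with the lost powers of $h$ and $\Dt$ absorbed by the spline degrees $p_{\text{space}}$ and $p_{\text{time}}$ (and, if necessary, a mild relation between $\Dt$ and $h$), and enough smoothness of the exact solution $u^*$; this technical estimate is the only nontrivial ingredient.
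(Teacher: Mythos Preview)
Your Step~1 is exactly the paper's argument: differentiate $\|u^*_\delta(t)\|_H^2$, integrate, subtract the orthogonality \eqref{eq:UnitaryTimeEvolutionDiscrete1}, and apply Cauchy--Schwarz to obtain
\[
\big|\|u^*_\delta(T)\|_H^2-\|u^*_\delta(0)\|_H^2\big|
\le 2\,\|\dot u^*_\delta\|_{L_2(I;V')}\,\inf_{v_\delta\in\V_\delta}\|u^*_\delta-v_\delta\|_{L_2(I;V)}.
\]

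The difference lies in Step~2. The paper does \emph{not} pass through the exact solution $u^*$: it bounds $\inf_{v_\delta\in\V_\delta}\|u^*_\delta-v_\delta\|_{L_2(I;V)}$ directly by the spline approximation estimates \eqref{eq:SplineApprox} applied to $u^*_\delta$ itself (with $r=0$, $\ell=1$ in time and $r=1$, $\ell=2$ in space), using the assumed inclusion $\U_\delta\subset L_2(I;H^2(\Omega;\C))\cap H^1(I;V')$; the uniform bound on $\|\dot u^*_\delta\|_{L_2(I;V')}$ is attributed simply to convergence as $\delta\to 0$ together with the extra regularity. Your detour via the triangle inequality $\|u^*_\delta-v_\delta\|_{L_2(I;V)}\le\|u^*_\delta-u^*\|_{L_2(I;V)}+\|u^*-v_\delta\|_{L_2(I;V)}$ forces you to control $\|u^*_\delta-u^*\|_{L_2(I;V)}$, and since the Petrov--Galerkin scheme is optimal only in $\|\cdot\|_\H$, this is where you are led to invoke inverse estimates and even hint at a CFL-type mesh relation. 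That is precisely the complication the paper's direct route avoids. Your argument is not wrong, but it manufactures a difficulty that is not there: once $u^*_\delta$ lies in $\U_\delta$ with the stated regularity, the approximation of $u^*_\delta$ from $\V_\delta$ on the \emph{same} mesh is a purely local spline question, and no comparison with $u^*$ (hence no inverse estimate or CFL restriction) is needed.
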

\begin{proof}
	By \eqref{eq:UnitaryTimeEvolutionDiscrete1}, we get for any $v_\delta\in\V_\delta$
	\begin{align*}
	\Vert u^*_\delta(T) \Vert_H^2 - \Vert u^*_\delta(0) \Vert_H^2
	&= \int_0^T{ \frac{d}{dt} \Vert u^*_\delta(t) \Vert_H^2 \; dt} 
	= 2\,\Re\!\int_0^T \langle \dot{u}^*_\delta(t), u^*_\delta(t)\rangle_{V'\times V}\, dt\\
	&= 2\,\Re\!\int_0^T \langle \dot{u}^*_\delta(t), u^*_\delta(t)-v_\delta(t)\rangle_{V'\times V}\, dt\\
	&\le 2\, \| \dot{u}^*_\delta\|_{L_2(I;V')}\, \| u^*_\delta - v_\delta\|_{L_2(I;V)}.
	\end{align*}
	Since this holds for any $v_\delta\in\V_\delta$, we can pass to the infimum and use
	\begin{align*}
		\inf_{v_\delta\in\V_\delta} \| u^*_\delta - v_\delta\|_{L_2(I;V)}
		= \cO (\Dt +h)
	\end{align*}
	by standard spline approximation results in \eqref{eq:SplineApprox}.\footnote{Using $r=0$, $\ell=1$ in time and $r=1$, $\ell=2$ in space.}  Finally, due to convergence as $\delta\to 0$ and the additional regularity, the term  $\| \dot{u}^*_\delta\|_{L_2(I;V')}$ is uniformly bounded.
\end{proof}

In case of higher regularity, the above result can be improved.

\begin{corollary}\label{cor:UnitaryTimeEvolDisc}
	If in addition to the assumptions of Proposition \ref{prop:UnitaryTimeEvolDisc} one has $u\in H^{k,m+1}(\Omega_T;\C)$, $1\le k\le p_{\text{time}}$, $1\le m\le p_{\text{space}}$ and $\U_\delta\subset H^{k,m+1}(\Omega_T;\C)$, then $\big\vert \Vert u^*_\delta(T) \Vert_H^2 - \Vert u^*_\delta(0) \Vert_H^2 \big\vert = \cO ((\Dt)^k +h^{m})$.
\end{corollary}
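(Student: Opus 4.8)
The plan is to refine the estimate in the proof of Proposition~\ref{prop:UnitaryTimeEvolDisc} by exploiting the extra regularity assumed on $u^*$ and on the trial space $\U_\delta$. Concretely, I would start from the identity established there, namely that for every $v_\delta\in\V_\delta$,
\begin{align*}
	\Vert u^*_\delta(T) \Vert_H^2 - \Vert u^*_\delta(0) \Vert_H^2
	= 2\,\Re\!\int_0^T \langle \dot{u}^*_\delta(t), u^*_\delta(t)-v_\delta(t)\rangle_{V'\times V}\, dt
	\le 2\, \| \dot{u}^*_\delta\|_{L_2(I;V')}\, \| u^*_\delta - v_\delta\|_{L_2(I;V)},
\end{align*}
which holds unchanged under the stronger hypotheses (the hypotheses of Proposition~\ref{prop:UnitaryTimeEvolDisc} are included among those of the corollary). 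The only thing that improves is the rate at which $\inf_{v_\delta\in\V_\delta}\| u^*_\delta - v_\delta\|_{L_2(I;V)}$ tends to zero: since now $u^*\in H^{k,m+1}(\Omega_T;\C)$ with $1\le k\le p_{\text{time}}$, $1\le m\le p_{\text{space}}$, the spline approximation estimates \eqref{eq:SplineApprox} applied with $r=0$, $\ell=k$ in time and $r=1$, $\ell=m$ in space give $\inf_{v_\delta\in\V_\delta}\| u^*- v_\delta\|_{L_2(I;V)} = \cO((\Dt)^k + h^m)$.

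The step that needs a little care is passing from the approximation of the \emph{exact} solution $u^*$ to the approximation of the \emph{discrete} solution $u^*_\delta$ inside the $L_2(I;V)$-norm. I would handle this by the triangle inequality: for any $w_\delta\in\V_\delta$ (hence $S^*w_\delta\in\U_\delta$) one has, writing $v^*\in\V$ for the element with $S^*v^*=u^*$,
\begin{align*}
	\| u^*_\delta - w_\delta\|_{L_2(I;V)}
	\le \| u^*_\delta - u^*\|_{L_2(I;V)} + \| u^* - w_\delta\|_{L_2(I;V)}
	\le C\big(\| u^*_\delta - u^*\|_\U + \| v^* - w_\delta\|_{\V}\big)?
\end{align*}
this last bound is, however, not what one wants directly, so instead the cleaner route is to note that $\| u^*_\delta - v_\delta\|_{L_2(I;V)}$ should simply be estimated by choosing $v_\delta$ to be the best $L_2(I;V)$-approximation to $u^*$ and invoking $\| u^*_\delta - u^*\|_\U = \cO((\Dt)^k + h^{m-1})$ from Subsection~\ref{sec:ApproximationProperties} together with an inverse-type bound only where unavoidable. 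I expect the main obstacle to be exactly this: controlling $\| u^*_\delta - u^*\|$ in the \emph{stronger} $L_2(I;V)$-norm rather than the $\U=L_2(I;H)$-norm in which the Petrov--Galerkin error estimate \eqref{eq:bestApprox} is naturally phrased, and doing so at the rate $\cO((\Dt)^k+h^m)$ without losing a power of $h$.

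Therefore the honest plan is as follows. First, apply the displayed inequality from Proposition~\ref{prop:UnitaryTimeEvolDisc} and reduce the claim to showing $\| u^*_\delta - v_\delta\|_{L_2(I;V)} = \cO((\Dt)^k+h^m)$ for a suitable $v_\delta\in\V_\delta$. Second, choose $v_\delta := \Pi_\delta u^*$, the tensor-product spline quasi-interpolant of $u^*$ into $R_\Dt\otimes Z_h$, so that $\| u^* - v_\delta\|_{L_2(I;V)} = \cO((\Dt)^k+h^m)$ by \eqref{eq:SplineApprox} with $r=0,\ell=k$ in time and $r=1,\ell=m$ in space. Third, bound $\| u^*_\delta - \Pi_\delta u^*\|_{L_2(I;V)}$: since both $u^*_\delta$ and $\Pi_\delta u^*$ carry the extra regularity and since the Petrov--Galerkin scheme is quasi-optimal with constant $1/\beta_\circ = 1$ by Proposition~\ref{Lem:LBB}, the error $u^* - u^*_\delta$ is itself of order $\cO((\Dt)^k+h^m)$ in the relevant norm provided the trial space $\U_\delta\subset H^{k,m+1}(\Omega_T;\C)$ contains a good $L_2(I;V)$-approximant, which is the content of the added hypothesis $\U_\delta\subset H^{k,m+1}$. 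Combining these three estimates via the triangle inequality and inserting into the first display, together with the uniform bound on $\| \dot u^*_\delta\|_{L_2(I;V')}$ already established in the proof of Proposition~\ref{prop:UnitaryTimeEvolDisc}, yields $\big\vert \Vert u^*_\delta(T) \Vert_H^2 - \Vert u^*_\delta(0) \Vert_H^2 \big\vert = \cO((\Dt)^k+h^m)$, as claimed.
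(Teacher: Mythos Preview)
Your plan overcomplicates the argument and creates precisely the difficulty you flag: controlling $\|u^*-u^*_\delta\|$ in the stronger $L_2(I;V)$-norm, which the Petrov--Galerkin theory only delivers in $\U=L_2(I;H)$. The paper avoids this detour entirely. Recall that the quantity to bound, coming out of the proof of Proposition~\ref{prop:UnitaryTimeEvolDisc}, is $\inf_{v_\delta\in\V_\delta}\|u^*_\delta - v_\delta\|_{L_2(I;V)}$, i.e., the approximation of the \emph{discrete} solution $u^*_\delta$ by $\V_\delta$. The hypothesis $\U_\delta\subset H^{k,m+1}(\Omega_T;\C)$ gives $u^*_\delta\in H^{k,m+1}(\Omega_T;\C)$ directly, so the spline estimates \eqref{eq:SplineApprox} apply to $u^*_\delta$ itself (with the parameters chosen to reflect the higher regularity), yielding the rate $(\Dt)^k+h^m$ without ever comparing $u^*_\delta$ to $u^*$ in $L_2(I;V)$. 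The assumption $u^*\in H^{k,m+1}$ is then what ensures, via convergence $u^*_\delta\to u^*$, that the relevant seminorms of $u^*_\delta$ stay uniformly bounded in $\delta$, so the implied constant is $\delta$-independent.

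Thus the paper's proof is literally the one sentence ``same as Proposition~\ref{prop:UnitaryTimeEvolDisc} with the spline parameters adjusted to the higher regularity''. Your Step~3---bounding $\|u^*_\delta-\Pi_\delta u^*\|_{L_2(I;V)}$ via quasi-optimality---is the genuine gap: quasi-optimality holds in $\U=L_2(I;H)$, and upgrading to $L_2(I;V)$ would require an inverse inequality, costing a power of $h$ and destroying the claimed rate. Drop the detour through $u^*$ and apply \eqref{eq:SplineApprox} directly to $u^*_\delta$.
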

\begin{proof}
	The proof is the same as the proof of Proposition \ref{prop:UnitaryTimeEvolDisc} but using \eqref{eq:SplineApprox} with $r=k$, $\ell=1$ in time and $r=1$, $\ell=m+1$ in space.
\end{proof}

Hence, the quantity $d_\delta(T):=\big\vert \Vert u_{\delta}(T) \Vert_H - \Vert u_{\delta}(0) \Vert_H \big\vert$ can expected to be \enquote{small}. We will investigate this in our subsequent numerical experiments in \S\ref{Sec:4}.

\medskip
\noindent\textbf{Galerkin discretization.}
The above considerations show that the inf-sup-optimal discretization is asymptotically norm-preserving, but not exactly. If one insists of a norm-preserving discretization, we indicate a possible space-time discretization. The price to pay is --as maybe expected-- suboptimal inf-sup stability.

\begin{definition}
	For to spaces $\U_\delta\subset\U$, $\V_\delta\subset\V\subset\U$, we define the \emph{maximal relative distance} by
	\begin{align}
		\varepsilon_\delta &:= \sup_{v_\delta\in\V_\delta} \inf_{u_\delta\in\U_\delta} \frac{\| u_\delta - v_\delta\|_\U}{\| v_\delta\|_\V}.
	\end{align}
\end{definition}
Obviously, this quantity ensures that for all $v_\delta\in\V_\delta$ there exists some $u_\delta(v_\delta)\in\U_\delta$ such that $\| v_\delta - u_\delta(v_\delta)\|_\U\le \varepsilon_\delta \| v_\delta\|_\V$ and by triangle inequality
\begin{align}\label{eq:InequalityEpsilonProximal}
	(1-\varepsilon_\delta) \Vert v_{\delta} \Vert_\V 
	\leq \Vert u_\delta(v_\delta)\Vert_{\U} 
	\leq (1+\varepsilon_\delta) \Vert v_{\delta} \Vert_\V.
\end{align}

\begin{proposition}
	We have $\varepsilon_\delta = \cO ((\Dt)^k +h^{m})$, $1\le k\le p_{\text{time}}$, $1\le m\le p_{\text{space}}$, if $u^*, \U_\delta\subset H^{k,m}(\Omega_T)$.
\end{proposition}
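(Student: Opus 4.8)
The plan is to turn the definition of $\varepsilon_\delta$ into a genuine approximation estimate by exploiting that, by construction of $\|\cdot\|_\V$ in \eqref{Def:V}--\eqref{eq:normV} together with the supremizer identity used in the proof of Theorem~\ref{Thm:stablecont} (where $S^* s_u = u$ for every $u\in\U$), the adjoint operator $S^*\colon\V\to\U$ is an \emph{isometric isomorphism}. Fix $0\neq v_\delta\in\V_\delta\subset\V\subset\U$ and let $\psi:=(S^*)^{-1}v_\delta\in\V$ be the unique element with $S^*\psi=v_\delta$. Since every $u_\delta\in\U_\delta=S^*(\V_\delta)$ is of the form $u_\delta=S^*w_\delta$ with $w_\delta\in\V_\delta$, and since $\|S^*\cdot\|_\U=\|\cdot\|_\V$, we obtain
\begin{align*}
	\inf_{u_\delta\in\U_\delta}\|v_\delta-u_\delta\|_\U
	=\inf_{w_\delta\in\V_\delta}\|S^*(\psi-w_\delta)\|_\U
	=\inf_{w_\delta\in\V_\delta}\|\psi-w_\delta\|_\V .
\end{align*}
Hence $\varepsilon_\delta=\sup_{0\neq v_\delta\in\V_\delta}\|v_\delta\|_\V^{-1}\inf_{w_\delta\in\V_\delta}\|\psi-w_\delta\|_\V$, and it suffices to show that for every $v_\delta\in\V_\delta$ the element $\psi=(S^*)^{-1}v_\delta$ is approximated by $\V_\delta$ in $\V$ at the rate $\big((\Dt)^k+h^m\big)\|v_\delta\|_\V$.

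For the approximation part I would use Remark~\ref{Rem:VVcirc}, which gives $\|\cdot\|_\V\le C\|\cdot\|_{\V_\circ}$ with $\V_\circ=H^1_{\{T\}}(I)\otimes\big(H^1_0(\Omega)\cap H^2(\Omega)\big)$, so it is enough to approximate $\psi$ in the tensor-product norm of $H^1(I;H)\cap L_2(I;H^2(\Omega))$. Taking $w_\delta$ to be a tensor-product spline interpolant $I_\Dt\otimes I_h\,\psi\in R_\Dt\otimes Z_h=\V_\delta$ and invoking the spline estimates \eqref{eq:SplineApprox} separately in time (measuring the $H^1(I)$-part) and in space (measuring the $H^2(\Omega)$-part), together with the $L_2$-stability of these interpolants, one arrives at a bound of the form
\begin{align*}
	\inf_{w_\delta\in\V_\delta}\|\psi-w_\delta\|_\V
	\le C\Big((\Dt)^{k}\,|\psi|_{H^{k+1}(I;H^2(\Omega))}+h^{m}\,|\psi|_{H^{1}(I;H^{m+2}(\Omega))}\Big),
\end{align*}
valid whenever $\psi$ possesses this regularity; alternatively one avoids $\V_\circ$ altogether and estimates $\|S^*(\psi-I_\Dt\otimes I_h\psi)\|_\U$ directly term by term, using $S^*=\mathrm{i}\partial_t+\tfrac12\Delta_x-\cV$ and $\cV\in L_\infty$.

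The decisive --- and only nontrivial --- step is then to establish that $\psi=(S^*)^{-1}v_\delta$ has the required smoothness and, crucially, that the relevant seminorms of $\psi$ are controlled by $\|v_\delta\|_\V=\|S^*v_\delta\|_\U$ uniformly over $v_\delta\in\V_\delta$. For this one invokes regularity theory for the \emph{backward} Schrödinger problem solved by $\psi$, namely $\mathrm{i}\partial_t\psi+\tfrac12\Delta_x\psi-\cV\psi=v_\delta$ in $\Omega_T$ with $\psi(T)=0$ and $\psi|_\Gamma=0$: differentiating this identity in $t$ and using spatial elliptic regularity, together with the boundedness and smoothness of the potential $\cV$ (cf.\ Assumption~\ref{assumption:UnitaryTimeEvolution}) and the energy estimate ensuring $\|(S^*)^{-1}\|_{\U\to\U}<\infty$, one propagates the regularity of the right-hand side $v_\delta$ --- a piecewise polynomial, hence smooth, element of $\V_\delta$ --- and of the zero terminal datum to $\psi$. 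This is exactly where the hypotheses $u^*,\U_\delta\subset H^{k,m}(\Omega_T)$ enter (the inclusion $\U_\delta=S^*(\V_\delta)\subset H^{k,m}(\Omega_T)$ being realized by choosing the test functions in $\V_\delta$ sufficiently smooth), and it is the point where I expect the real work: because the Schrödinger operator has no smoothing effect, one cannot rely on parabolic-type space-time regularity but must carefully track how the finite regularity norms of $\psi$ depend on $\|v_\delta\|_\V$, possibly with the help of inverse inequalities on the finite-dimensional space $\V_\delta$. Once the bound $|\psi|_{H^{k+1}(I;H^2(\Omega))}+|\psi|_{H^{1}(I;H^{m+2}(\Omega))}\le C\|v_\delta\|_\V$ is available, combining the three steps and passing to the supremum over $v_\delta\in\V_\delta$ yields $\varepsilon_\delta=\cO\big((\Dt)^k+h^m\big)$.
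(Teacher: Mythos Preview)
Your route is genuinely different from the paper's. The paper gives a one–line argument: it invokes the spline estimates \eqref{eq:SplineApprox} directly with $r=0$ in both variables and simply asserts $\|u_\delta-v_\delta\|_\U\le C\big((\Dt)^k+h^m\big)\|v_\delta\|_\V$, the footnote ``higher order norm on the right-hand side'' indicating that the whole mechanism is the gap between the $L_2$-norm on the left and the (derivative–controlling) $\V$-norm on the right. No choice of $u_\delta\in\U_\delta$ is made explicit, and no detour through $\psi=(S^*)^{-1}v_\delta$ is taken. In contrast, you first exploit the isometry $S^*\colon\V\to\U$ to rewrite $\inf_{u_\delta\in\U_\delta}\|v_\delta-u_\delta\|_\U=\inf_{w_\delta\in\V_\delta}\|\psi-w_\delta\|_\V$ and then try to close the estimate by spline approximation in the $\V$-norm together with a regularity bound for $\psi$. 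Your transformation is correct and isolates the analytic content that the paper's proof leaves implicit.

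There is, however, a real gap in the step you yourself flag as ``the real work''. You need
\[
|\psi|_{H^{k+1}(I;H^2(\Omega))}+|\psi|_{H^{1}(I;H^{m+2}(\Omega))}\le C\,\|v_\delta\|_\V
\]
uniformly over $v_\delta\in\V_\delta$, and you propose to obtain it from backward–Schr\"odinger regularity possibly combined with inverse inequalities on $\V_\delta$. This combination does \emph{not} deliver the claimed higher rates: regularity for $S^*\psi=v_\delta$, $\psi(T)=0$, gives at best $\|\psi\|_{H^{k+1}(I;L_2)}\lesssim\|v_\delta\|_{H^{k}(I;L_2)}$, and controlling $\|v_\delta\|_{H^{k}(I;L_2)}$ by $\|v_\delta\|_\V$ (which is comparable to $\|v_\delta\|_{\V_\circ}\sim\|v_\delta\|_{H^{1,2}}$) forces an inverse inequality that costs a factor $(\Dt)^{-(k-1)}$. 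This exactly cancels the gain from spline approximation, so your argument as written yields only $\varepsilon_\delta=\cO(\Dt+h)$, i.e.\ the case $k=m=1$, and not the full range $1\le k\le p_{\text{time}}$, $1\le m\le p_{\text{space}}$. The paper's terse proof sidesteps this issue by working entirely at the $L_2$-level ($r=0$) rather than in $\|\cdot\|_\V$; if you want to stay with your transformation, you would need an approximation argument for $\psi$ that avoids the high Sobolev seminorms altogether.
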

\begin{proof}
	We use the spline approximation \eqref{eq:SplineApprox} for $r=0$, $\ell=k-1$ in time and $r=0$, $\ell=m-1$ to obtain that $\| u_\delta - v_\delta\|_\U \le C\, ((\Dt)^{k} +h^{m}) \| v_\delta\|_\V$, which yields the claim.\footnote{Note the higher order norm on the right-hand side.}
\end{proof}

\begin{proposition}
\label{prop:InfSupProximal}
Let $\U_{\delta}=S^*(\V_\delta)$, then the sesquilinear form $b(\cdot,\cdot)$, given in \eqref{eq:veryweak:1}, satisfies
\begin{align}
\label{eq:DiscreteInfSupProximal}
	\tilde{\beta}_{\delta}
	&:= \inf_{v_\delta\in\V_\delta} \sup_{w_\delta\in \V_\delta} 
		\frac{|b(v_\delta, w_\delta)|}{\| w_\delta\|_\V\, \| v_\delta\|_\V} 
		\geq 1-2\varepsilon_\delta.
\end{align}
In particular, for any $g \in \V'$ the Galerkin problem
\begin{align}
	\label{eq:UltraWeakFormulationProximal}
	\text{find } v_{\delta} \in \V_{\delta}: 
	\qquad b(v_{\delta}, w_{\delta}) = g(w_{\delta}) \qquad \text{for all } w_{\delta} \in \V_{\delta}
\end{align}
admits a unique solution provided that $\varepsilon_\delta<\frac12$.
\end{proposition}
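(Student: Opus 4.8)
The plan is to reduce the discrete inf-sup bound on $\V_\delta\times\V_\delta$ to the already-established identity $\beta_\delta=1$ from Proposition \ref{Lem:LBB}, using the fact that $\U_\delta=S^*(\V_\delta)$ and that every $v_\delta\in\V_\delta$ is $\eps_\delta$-close to some $u_\delta(v_\delta)\in\U_\delta$. Fix $0\ne v_\delta\in\V_\delta$. First I would invoke Proposition \ref{Lem:LBB}: since $u_\delta(v_\delta)\in\U_\delta$, there is a $w_\delta\in\V_\delta$ with
\begin{align*}
	|b(u_\delta(v_\delta),w_\delta)| \ge \| u_\delta(v_\delta)\|_\U\,\| w_\delta\|_\V
\end{align*}
(the supremum in $\beta_\delta=1$ is attained at the supremizer, which lies in $\V_\delta$ because $\U_\delta=S^*(\V_\delta)$). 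Then I split
\begin{align*}
	|b(v_\delta,w_\delta)| \ge |b(u_\delta(v_\delta),w_\delta)| - |b(v_\delta-u_\delta(v_\delta),w_\delta)|,
\end{align*}
and bound the second term via boundedness of $b$ with constant unity (Theorem \ref{Thm:stablecont}, (C.1)), giving $|b(v_\delta-u_\delta(v_\delta),w_\delta)|\le \| v_\delta-u_\delta(v_\delta)\|_\U\,\| w_\delta\|_\V \le \eps_\delta\|v_\delta\|_\V\,\|w_\delta\|_\V$ by the defining property of $\eps_\delta$. For the first term I use $\| u_\delta(v_\delta)\|_\U \ge (1-\eps_\delta)\|v_\delta\|_\V$ from \eqref{eq:InequalityEpsilonProximal}. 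Combining, $|b(v_\delta,w_\delta)| \ge (1-\eps_\delta)\|v_\delta\|_\V\,\|w_\delta\|_\V - \eps_\delta\|v_\delta\|_\V\,\|w_\delta\|_\V = (1-2\eps_\delta)\|v_\delta\|_\V\,\|w_\delta\|_\V$. Dividing by $\|v_\delta\|_\V\,\|w_\delta\|_\V$ and taking the infimum over $v_\delta$ yields $\tilde\beta_\delta\ge 1-2\eps_\delta$, which is \eqref{eq:DiscreteInfSupProximal}.

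For the well-posedness claim: the Galerkin problem \eqref{eq:UltraWeakFormulationProximal} is a square finite-dimensional linear system (trial and test space coincide, hence equal dimension), so existence and uniqueness follow once injectivity is shown. If $\eps_\delta<\tfrac12$, then $\tilde\beta_\delta\ge 1-2\eps_\delta>0$, so $b(v_\delta,\cdot)\equiv 0$ on $\V_\delta$ forces $\|v_\delta\|_\V=0$, i.e.\ $v_\delta=0$; a square system with trivial kernel is invertible, giving a unique solution for every right-hand side $g\in\V'$.

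The only subtle point is justifying that the supremizer realizing $\beta_\delta=1$ for the element $u_\delta(v_\delta)$ actually lies in $\V_\delta$ (so that it is an admissible $w_\delta$). This is exactly the structural fact exploited in Proposition \ref{Lem:LBB}: for $u_\delta\in\U_\delta=S^*(\V_\delta)$ one writes $u_\delta=S^*z_\delta$ with $z_\delta\in\V_\delta$, and then $b(u_\delta,z_\delta)=(u_\delta,S^*z_\delta)_\U=\|u_\delta\|_\U^2=\|u_\delta\|_\U\,\|S^*z_\delta\|_\U=\|u_\delta\|_\U\,\|z_\delta\|_\V$, so $w_\delta:=z_\delta\in\V_\delta$ does the job. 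I would simply reproduce this one line rather than cite "the supremizer". Everything else is triangle inequality and the two bounds already packaged in \eqref{eq:InequalityEpsilonProximal} and the unit boundedness of $b$, so I expect no real obstacle beyond keeping the roles of the $\U$- and $\V$-norms straight (note $\V\hookrightarrow\U$, so $\|v_\delta-u_\delta(v_\delta)\|_\U$ is the relevant quantity and $\eps_\delta$ is tailored to it).
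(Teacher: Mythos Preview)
Your proof is correct and follows essentially the same route as the paper: pick $u_\delta(v_\delta)\in\U_\delta$ close to $v_\delta$, use Proposition~\ref{Lem:LBB} to get a $w_\delta\in\V_\delta$ realizing $|b(u_\delta(v_\delta),w_\delta)|=\|u_\delta(v_\delta)\|_\U\|w_\delta\|_\V$, then split and apply unit boundedness together with \eqref{eq:InequalityEpsilonProximal}. The only cosmetic difference is in the well-posedness part: the paper invokes Ne\v{c}as' theorem (checking boundedness and a non-degeneracy condition $b(v_\delta,v_\delta)\ne 0$), whereas you argue directly that $\tilde\beta_\delta>0$ gives injectivity of a square finite-dimensional system---your argument is cleaner and entirely sufficient.
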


\begin{proof}
Let $v_\delta\in\V_\delta$ and choose $u_\delta(v_\delta)\in\U_\delta$ satisfying \eqref{eq:InequalityEpsilonProximal}. Then, Proposition \ref{Lem:LBB} ensures that
\begin{align*}
	\sup_{w_\delta\in\V_\delta} 
		\frac{|b(u_\delta(v_\delta),w_\delta)|}{ \| w_\delta\|_\V} 
		= \| u_\delta(v_\delta)\|_\U\,.
\end{align*}
Denote by $Tv_\delta\in\V_\delta$ the argument of the previous supremum, i.e., $|b(u_\delta(v_\delta),Tv_\delta)|=\| u_\delta(v_\delta)\|_\U\, \| Tv_\delta\|_\V$. Then, we obtain by the boundedness of the bilinear form $b(\cdot,\cdot)$ with continuity constant being unity
\begin{align*}
	|b(v_\delta, Tv_\delta)|
	&= |b(u_\delta(v_\delta), Tv_\delta) + b(v_\delta-u_\delta(v_\delta), Tv_\delta)| \\
	&\ge |b(u_\delta(v_\delta), Tv_\delta)| - \| w_\delta-u_\delta(v_\delta)\|_\U\, \| Tv_\delta\|_\V \\
	&\ge \| u_\delta(v_\delta)\|_\U\, \| Tv_\delta\|_\V - \varepsilon_\delta \, \| v_\delta\|_\V \, \| Tv_\delta\|_\V \\
	&\ge (1-\varepsilon_\delta) \| v_\delta\|_\V \, \| Tv_\delta\|_\V - \varepsilon_\delta \, \| v_\delta\|_\V \, \| Tv_\delta\|_\V
	= (1-2\varepsilon_\delta) \| v_\delta\|_\V \, \| Tv_\delta\|_\V.
\end{align*}
Hence, 
\begin{align*}
	\sup_{w_\delta\in \V_\delta} 
		\frac{|b(v_\delta, w_\delta)|}{\| w_\delta\|_\V\, \| v_\delta\|_\V} 
	&\ge \frac{|b(v_\delta, Tv_\delta)|}{\| Tv_\delta\|_\V\, \| v_\delta\|_\V} 
	\ge 1-2\varepsilon_\delta,
\end{align*}
i.e., \eqref{eq:DiscreteInfSupProximal}. 
Moreover, continuity $|b(v_\delta, w_\delta)| \le \|v_\delta\|_\U\, \|w_\delta\|_\V \le \|v_\delta\|_\V\, \|w_\delta\|_\V$ is easily seen by Theorem \ref{Thm:stablecont}. Finally, for $v_\delta\not=0$, we have $b(v_\delta,v_\delta) = (v_\delta,S^*v_\delta)_\U\not=0$, so that the well-posedness of \eqref{eq:UltraWeakFormulationProximal} due to Theorem \ref{thm:Necas}.
\end{proof}

Now, we follow the same path as in the continuous case by specifying \eqref{eq:UltraWeakFormulationProximal} for the special case of a right-hand side, namely
\begin{align}
\label{eq:UnitaryTimeEvolutionDiscrete2}
	\text{find } v_{\delta} \in \V_{\delta}: \, 
		b(v_\delta,w_\delta) 
		= (v_{\delta}, S^* w_{\delta})_\U 
		= \mathrm{i} (u_{0,\delta}, w_{\delta}(0))_{H} 
			\quad \forall w_{\delta} \in \V_{\delta}.
\end{align}
As in the continuous case, integration by parts ensures that \eqref{eq:UnitaryTimeEvolutionDiscrete2} is equivalent to
\begin{align*}
	&\int_0^T\!\!\! \{ \mathrm{i} \langle \partial_t v_\delta(t), w_{\delta,1}(t) \rangle_{V' \times V} 
		\!-\!  a_t(v_\delta(t), w_{\delta,1}(t)) \} dt  
	+ \mathrm{i} \langle v_\delta(0),w_{\delta,2} \rangle_{V \times V'} 
	= \\
	&\qquad=\mathrm{i} \langle u_{\delta,0},w_{\delta,2} \rangle_{V \times V'}
\end{align*}%
for all $w_\delta=(w_{1,\delta},w_{2,\delta})\in\V_\delta\times V_h$.  Now, we can choose $w_{\delta}=(v_{\delta}-\mathrm{i} v_\delta(0))$ and taking imaginary parts yield $\|v_{\delta}(T)\|_H=\|u_{\delta}(0)\|_H$, i.e., the norm of the discrete initial data is preserved.

\begin{remark}
	From the above considerations, we can deduce that we cannot expect optimal inf-sup (i.e., error and residual coincide) and norm-preservation at the same time. Hence, one must decide which of the two properties is more important from an application point of view.
\end{remark}

\section{Ultra-Weak Numerical Solution}
\label{Sec:4}

In this section, we derive the algebraic system arising from the ultra-weak formulation and describe its solution. The potential is multiplicative (i.e., independent of the wave function), so that the arising algebraic system is a linear one.

\subsection{The linear system}
\label{SubSec:LinSystem}
To derive the stiffness matrix, we start by using arbitrary discrete spaces\footnote{The optimal ones will be considered $\psi_\mu=S^*(\varphi_\mu)$ below.} induced by $\{\psi_\mu := \sigma^\ell\otimes\xi_j :\, \mu=1,...,\cN_\delta\}$ for the trial and $\{  \varphi_\nu = \varrho^k\otimes\phi_i:\, \nu=1,...,\cN_\delta\}$ for the test space. Using the notation $[\bbS_\delta]_{\mu,\nu}=[\bbS_\delta]_{(\ell,j),(k,i)}$, we get
\allowdisplaybreaks
\begin{align}
		 [\bbS_\delta]_{(\ell,j),(k,i)} 
		&= b(\psi_\mu,\varphi_\nu) 
		= (\psi_\mu,S^*\varphi_\nu)_\U  
		 = ( \sigma^\ell\otimes\xi_j,
		 	(\mathrm{i}\partial_{t} + \textstyle{\frac12}\Delta_x - \cV) \varrho^k\otimes\phi_i
			)_\U \nonumber\\
		&= -\mathrm{i} (\sigma^\ell, \dot\varrho^k)_{L_2(I)}\,
				(\xi_j, \phi_i)_{L_2(\Omega)}
			+\textstyle{\frac12} (\sigma^\ell, \varrho^k)_{L_2(I)}\,
				(\xi_j, \Delta_x \phi_i)_{L_2(\Omega)}
		\label{Eq:Bdelta-general}\\
		&\qquad - ( \sigma^\ell\otimes\xi_j, \cV [\varrho^k\otimes\phi_i])_{L_2(\Omega_T)}.
			\nonumber
\end{align}
In the specific case $\psi_\mu=S^*(\varphi_\mu)$, we get the representation
\allowdisplaybreaks
\begin{align*}
	 [\bbS_\delta]_{(\ell,j),(k,i)} 
	&= b(\psi_\mu,\varphi_\nu) 
		= (\psi_\mu,S^*\varphi_\nu)_\U  
		= (S^*\varphi_\mu,S^*\varphi_\nu)_\U 
					\nonumber\\
		&= ( (\mathrm{i}\partial_{t} +\textstyle{\frac12}\Delta_x - \cV) 
				(\varrho^\ell\otimes\phi_j),
			(\mathrm{i}\partial_{t} +\textstyle{\frac12}\Delta_x - \cV)  
				(\varrho^k\otimes\phi_i))_{\U}.
\end{align*}
Now, we need to take the rules for complex integration into account and get
\allowdisplaybreaks
\begin{align}
	 [\bbS_\delta]_{(\ell,j),(k,i)} 
		&= (\dot\varrho^\ell,\dot\varrho^k)_{L_2(I)}\, (\phi_j, \phi_i)_{L_2(\Omega)}
			+ \textstyle{\frac14} (\varrho^\ell, \varrho^k)_{L_2(I)}\, (\Delta_x\phi_j, \Delta_x\phi_i)_{L_2(\Omega)}
				\nonumber\\
		&\quad - \textstyle{\frac12} ( \cV[\varrho^\ell\otimes\phi_j], \varrho^k\otimes\Delta_x\phi_i)_{L_2(\Omega_T)}
				- \textstyle{\frac12} ( \varrho^\ell\otimes\Delta_x\phi_j, \cV[\varrho^k\otimes\phi_i])_{L_2(\Omega_T)}
				\nonumber\\
		&\quad + ( \cV[\varrho^\ell\otimes\phi_j], \cV[\varrho^k\otimes\phi_i])_{L_2(\Omega_T)}
				\nonumber\\
		&\quad + \textstyle{\frac{\textrm{i}}2} (\dot\varrho^\ell, \varrho^k)_{I}\, (\phi_k, \Delta_x\phi_i)_{L_2(\Omega)}
			- \textstyle{\frac{\textrm{i}}2} (\varrho^\ell, \dot\varrho^k)_{I}\, (\Delta_x \phi_k, \phi_i)_{L_2(\Omega)}
				\nonumber\\
		&\quad +\mathrm{i} (\cV[\dot\varrho^\ell\otimes\phi_j], \varrho^k\otimes\phi_i)_{L_2(\Omega_T)}
			-\mathrm{i} (\varrho^\ell\otimes\phi_j, \cV[\dot\varrho^k\otimes\phi_i])_{L_2(\Omega_T)}
	\label{Eq:Bdelta-optimal}
\end{align}
so that 
\begin{align}
	\bbS_\delta
		&= \bA_\Dt \otimes \bM_h + \textstyle{\frac14} \bM_\Dt\otimes \bA_h 
			- \textstyle{\frac12} (\bbV_\delta +\bbV_\delta^T)  + \bbW_\delta 
				\nonumber\\
		&\quad +\mathrm{i}\Big[ \textstyle{\frac12}  \bN_\Dt\otimes \bN_h^T 
				- \textstyle{\frac12} \bN_\Dt^T \otimes \bN_h
				+ (\bbU_\delta - \bbU_\delta^T)\Big]
			=:  \bbA_\delta + \mathrm{i}\, \bbB_\delta,
		\label{eq:Sdelta}
\end{align}
where
\begin{align*}
	[\bA_\Dt]_{\ell,k} &:= (\dot\varrho^\ell,\dot\varrho^k)_{L_2(I)},
	&&&
	[\bA_h]_{j,i} &:= (\Delta_x\phi_j, \Delta_x\phi_i)_{L_2(\Omega)},\\
	[\bM_\Dt]_{\ell,k} &:= (\varrho^\ell,\varrho^k)_{L_2(I)},
	&&&
	[\bM_h]_{j,i} &:= (\phi_j, \phi_i)_{L_2(\Omega)},\\
	[\bN_\Dt]_{\ell,k} &:= (\dot\varrho^\ell,\varrho^k)_{L_2(I)},
	&&&
	[\bN_h]_{j,i} &:= (\Delta_x\phi_j, \phi_i)_{L_2(\Omega)},
\end{align*}
as well as
\begin{align*}
	[\bbU_\delta]_{(\ell,j),(k,i)} &:= (\cV[\dot\varrho^\ell\otimes\phi_j], \varrho^k\otimes\phi_i)_{L_2(\Omega_T)},  \\
	[\bbV_\delta]_{(\ell,j),(k,i)} &:= (\varrho^\ell\otimes\Delta_x\phi_j, \cV[\varrho^k\otimes\phi_i])_{L_2(\Omega_T)} ,\\
	[\bbW_\delta]_{(\ell,j),(k,i)} &:= ( \cV[\varrho^\ell\otimes\phi_j], \cV[\varrho^k\otimes\phi_i])_{L_2(\Omega_T)}.
\end{align*}
The latter three matrices $\bbU_\delta$, $\bbV_\delta$ and $\bbW_\delta$ involve the potential $\cV$ and are thus, in general, not separable. However, there are several methods to (at least) approximate $\cV(t,x)$ in terms of a sum of separable functions. Hence, let us for simplicity assume that $\cV$ is a tensorproduct, i.e., 
\begin{align}\label{eq:Vtenprod}
	\cV(t,x) = \Theta(t)\, \Xi(x),
		\qquad t\in I, x\in\Omega
\end{align}
for functions $\Theta\in L_\infty(I;\R)$ and $\Xi\in L_\infty(\Omega;\R)$. 
In that case, we can further detail the matrices $\bbU_\delta$, $\bbV_\delta$ and $\bbW_\delta$ as follows
\begin{align*}
	[\bbU_\delta]_{(\ell,j),(k,i)} 
	&= (\cV[\dot\varrho^\ell\otimes\phi_j], \varrho^k\otimes\phi_i)_{L_2(\Omega_T)}
		= (\Theta\, \dot\varrho^\ell, \varrho^k)_{L_2(I)}\, (\Xi\, \phi_j,\phi_i)_{L_2(\Omega)},\\
	[\bbV_\delta]_{(\ell,j),(k,i)} 
	&= (\varrho^\ell\otimes\Delta_x\phi_j, \cV[\varrho^k\otimes\phi_i])_{L_2(\Omega_T)}
		= (\varrho^\ell, \Theta\, \varrho^k)_{L_2(I)}\, (\Delta_x \phi_j, \Xi\, \phi_i)_{L_2(\Omega)},\\
	[\bbW_\delta]_{(\ell,j),(k,i)} 
	&= ( \cV[\varrho^\ell\otimes\phi_j], \cV[\varrho^k\otimes\phi_i])_{L_2(\Omega_T)}
		= (\Theta\,\varrho^\ell, \Theta\, \varrho^k)_{L_2(I)}\, (\Xi\,\phi_j, \Xi\, \phi_i)_{L_2(\Omega)}.
\end{align*}
Finally, let us now detail the right-hand side. Recall 
from \eqref{eq:veryweak:2}, that $g(v)= (f,v)_\U$. Hence,
\begin{align*}
	[\bg_\delta]_\nu 
		&= [\bg_\delta]_{(k,i)}
		= \langle g,\varrho^k\otimes \phi_i\rangle_{\V'\times\V}+ \mathrm{i} (\psi_0, \varrho^k(0)\otimes \phi_i)_{L_2(\Omega;\C)}\\
		&= \int_0^T \int_\Omega g(t,x)\, \varrho^k(t)\, \phi_i(x)\, dx\, dt
			+\mathrm{i} \varrho^k(0) \int_\Omega \psi_0(x)\, \phi_i(x)\, dx
\end{align*}
Using appropriate quadrature formulae results in a numerical approximation, which we will again denote by $\bg_\delta$. 
Then, solving the linear system $\bbS_\delta \bu_\delta=\bg_\delta$ yields the expansion coefficients of the desired approximation $u_\delta\in\U_\delta$ as follows: Let $\bu_\delta=(u_\mu)_{\mu=1,...,\cN_\delta}\in\C^{\cN_\delta}$, $\mu=(k,i)$, then we obtain
\begin{align*}
	u_\delta(t,x)
	&= \sum_{\mu=1}^{\cN_\delta}  u_\mu\, \psi_\mu(t,x)
	= \sum_{k=1}^{N_t}\sum_{i=1}^{N_h} u_{k,i}\, \sigma^k(x)\, \xi_i(x), 
\end{align*}
in the general case and for the special one, i.e., $\psi_\mu=S^*(\varphi_\mu)$ under the assumption \eqref{eq:Vtenprod}, we obtain
\begin{align}
	u_\delta(t,x)
	&= \sum_{\mu=1}^{\cN_\delta}  u_\mu\, \psi_\mu(t,x)
		\nonumber\\
	&= \sum_{k=1}^{N_t}\sum_{i=1}^{N_h} u_{k,i}\, 
		\big(\mathrm{i}\dot{\varrho}^k(t)\, \phi_i(x) 
				+\textstyle{\frac12} \varrho^k(t)\,  \Delta_x\phi_i(x)
				-\Theta(t)\varrho^k(t)\, \Xi(x)\phi_i(x)
				\big).
				\label{eq:numsolution}
\end{align}

\subsection{Numerical solution of the algebraic linear system}
\label{Subsec:AlgebraicLinearSystem}
For our numerical experiments we simply use  \MATLAB's \texttt{mldivide} (i.e., the backslash operator), which allows us to solve a system of linear equations with complex-valued entries. The development of more sophisticated solvers is devoted to future work, \cite{henning2021weak}. Recalling the form \eqref{eq:Sdelta} of the stiffness matrix and writing the vectors $\bu_\delta$ and $\bg_\delta$ of the desired solution and the right-hand side in the form $\bu_\delta = \bx_\delta + \mathrm{i}\, \by_\delta$ and $\bg_\delta = \bb_\delta + \mathrm{i}\, \bc_\delta$ with real-valued vectors of dimension $\cN_\delta$ each, we need to solve the block system
\begin{align*}
	\begin{pmatrix} \bbA_\delta & -\bbB_\delta \\ \bbB_\delta & \bbA_\delta \end{pmatrix}
	\begin{pmatrix} \bx_\delta \\ \by_\delta \end{pmatrix}
	=
	\begin{pmatrix} \bb_\delta \\ \bc_\delta \end{pmatrix}	
\end{align*}
to obtain real and imaginary part of the coefficient vector in  \eqref{eq:numsolution}.

\subsection{Conditioning}
\label{Subsec:Conditioning}

\begin{figure}[htb]
\centering
			\begin{tikzpicture}
				\begin{loglogaxis}[xlabel = {Dimension $\cN_\delta$}, ylabel = {$\kappa_2$},  legend cell align={left}, grid=both,width= 0.95\textwidth,height=5cm,xmin=20,xmax=10000]
					\addplot[mark=none, green, line width=1pt] table [x=NDOFS, y=COND] {results/results_smooth_levTime_minus1.txt};
					\addplot[mark=none, red, line width=1pt] table [x=NDOFS, y=COND] {results/results_smooth_levTime_plus0.txt};
					\addplot[mark=none, blue, line width=1pt] table [x=NDOFS, y=COND] {results/results_smooth_levTime_plus1.txt};
				\end{loglogaxis}
			\end{tikzpicture} 
			\caption{Spectral condition number $\kappa_2$ of system matrix $\bbS_{\delta}$ obtained by Schrödinger equation for a free particle, over the dimension $\cN_\delta$ (time and space) using different discretization strategies (time level vs.\ space level: green: $-1$, red: equal, blue: $+1$, see also \S\ref{Sec:5} below).}
			\label{Fig:ConditionNumber}
\end{figure}
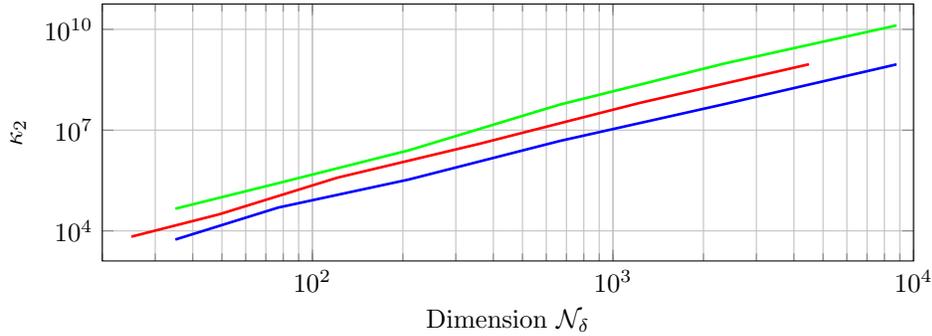

Since we use a direct method to solve the algebraic linear system, we did not use preconditioning for our numerical experiments. The reason is that we are mainly interested in the \emph{quantitative} performance of the proposed discretization and will devote the development of specific linear solvers to future work. To give an impression of that issue, we note, that following \cite{henning2021weak}, efficient iterative numerical solvers can be constructed by taking advantage of the tensor product nature of the linear system, preconditioning methods and a tailored Galerkin projection. In fact, for the spectral condition number $\kappa_2$ of the system matrix $\bbS_\delta$ obtained by the Schrödinger equation of a free particle, we observe a similar asymptotic behavior as for the system matrix of the Poisson problem, namely $\kappa_2 = \mathcal{O}(1/\delta^2)$, see Figure \ref{Fig:ConditionNumber}.

\section{Numerical Experiments}
\label{Sec:5}
In this section, we present some results of our numerical experiments concerning the proposed ultra-weak variational formulation for the Schrödinger equation.  We restrict ourselves to the Schrödinger equation for a free particle in a 1D potential well, i.e., for the unit interval $\Omega := (0,1)$, $I := (0,1)$, we consider the model problem \eqref{eq:SchroedingerEquation} with $\cV\equiv 0$ and $g\equiv 0$, which is a quite simple and well-understood problem in quantum mechanics, see e.g.\ \cite{CMollet} for some numerical investigations.

We choose uniform discretizations, i.e.,  dyadic uniform meshes in space and time of grid size $\Dt=2^{-j_{\text{time}}}$ and $h=2^{-j_{\text{space}}}$, i.e.: 
\begin{align*}
	\mathcal{T}_{\Delta t}^{\text{time}} = k \, \Delta t 
	\quad \text{and} \quad 
	\mathcal{T}_{h}^{\text{space}} = \ell \, h, 
	\qquad k=0,..., 2^{j_{\text{time}}}, \ 
	\ell = 0,..., 2^{j_{\text{space}}}.
\end{align*}
Here $j_{\text{time}}, j_{\text{space}} \in \mathbb{N}$ denote the refinement level of the temporal or spatial discretization, respectively. In order to demonstrate the stability of the method, we investigate several combinations of  $j_{\text{time}}$ and $j_{\text{space}}$, i.e., $j_{\text{time}} = j_{\text{space}} - 1$, $j_{\text{time}} = j_{\text{space}}$ and $j_{\text{time}} = j_{\text{space}} + 1$, see also Figure \ref{Fig:ConditionNumber} above.

On these grids, we build the Petrov-Galerkin discretization according to \S\ref{sec:StablePG}, namely $\mathbb{U}_{\delta} = S^*(\mathbb{V}_{\delta})$ as well as $\mathbb{V}_{\delta} = R_{\Delta t} \otimes Z_h$. For those cases, where we use continuous trial functions, $R_{\Delta t}$ is spanned by B-Splines of order 3 and $Z_h$ by B-Splines of order 4. We use the \textsc{Matlab} spline package from \cite{CMollet}.

\subsection{Uniform stability}

We first check the uniform stability, namely if we can observe that $\beta_{\delta} = 1$, independent of the discretization $\delta = (\Delta t, h)$. The results are shown in Table \ref{Tbl:InfSupConstants} conform this expectation. We do not need any CFL-condition fixing the relation of temporal and spatial discretization.

\newcolumntype{L}{>{$}l<{$}}
\newcolumntype{R}{>{$}r<{$}}
\newcolumntype{C}{>{$}c<{$}}

\begin{table}[htb]
{\scriptsize
	\begin{minipage}{0.32\textwidth}
		\begin{tabular}{|C|C|R|R|}
		\hline 
		j_{\text{space}} & j_{\text{time}} & \cN_\delta & \beta_{\delta} \\ 
		\hline 
		1 & 0 & 6 & 1.0 \\ 
		\hline 
		2 & 1 & 15 & 1.0 \\ 
		\hline 
		3 & 2 & 45 & 1.0 \\ 
		\hline 
		4 & 3 & 153 & 1.0 \\ 
		\hline 
		5 & 4 & 561 & 1.0 \\ 
		\hline 
		\end{tabular} 
	\end{minipage}
	\begin{minipage}{0.32\textwidth}
		\begin{tabular}{|C|C|R|R|}
		\hline 
		j_{\text{space}} & j_{\text{time}} & \cN_\delta & \beta_{\delta} \\ 
		\hline 
		1 & 1 & 9 & 1.0 \\ 
		\hline 
		2 & 2 & 25 & 1.0 \\ 
		\hline 
		3 & 3 & 81 & 1.0 \\ 
		\hline 
		4 & 4 & 289 & 1.0 \\ 
		\hline 
		5 & 5 & 1.089 & 1.0 \\ 
		\hline 
		\end{tabular} 
	\end{minipage}
	\begin{minipage}{0.32\textwidth}
		\begin{tabular}{|C|C|R|R|}
		\hline 
		j_{\text{space}} & j_{\text{time}} & \cN_\delta & \beta_{\delta} \\ 
		\hline 
		1 & 2 & 15 & 1.0 \\ 
		\hline 
		2 & 3 & 45 & 1.0 \\ 
		\hline 
		3 & 4 & 153 & 1.0 \\ 
		\hline 
		4 & 5 & 561 & 1.0 \\ 
		\hline 
		5 & 6 & 2.145 & 1.0 \\ 
		\hline 
		\end{tabular} 
	\end{minipage}
	\caption{Discrete inf-sup constant $\beta_{\delta}$ for different discretizations.}
	\label{Tbl:InfSupConstants}
}
\end{table}

\subsection{Approximation and norm preservation}
Next, we investigate the rate of approximation of the ultra-weak discretization as well as the quantity
\begin{align}
	\label{eq:ErrorTermNormPreservingProperty}
	d_\delta(T) := \big\vert \Vert u_{\delta}(T) \Vert_H - \Vert u_{\delta}(0) \Vert_H \big\vert,
\end{align} 
measuring the deviation from norm preservation, see Definition \ref{Def:NormPreserving} and Proposition \ref{prop:UnitaryTimeEvolDisc}. Both quantities may depend on the regularity of the solution, which in turn can be be controlled by varying the initial condition $u_0$. Hence, we investigate three cases, namely:
\begin{compactitem}
	\item[(a)] \textbf{Smooth case}: choose a smooth initial condition $u_0(x) = \sqrt{2} \sin{(\pi x)} \in C^{\infty}(\Omega) \cap C(\overline{\Omega})$,
	where $\Vert u_0 \Vert_H = 1$, which yields a closed form of the analytical solution $u(t,x) = u_0(x) \, \exp{(-\mathrm{i} {\textstyle{\frac{\pi^2} {2}}}t)}$, see also \cite{CMollet}.
	\item[(b)] \textbf{Sobolev case}: choose $u_0(x) = -3 \, \vert x-0.5 \vert + {\textstyle\frac32} \in H_0^1(\Omega)$, such that with $\Vert u_0 \Vert_{H} = 1$. Since an analytical solution is not available, we construct a reference solution by a sufficiently accurate time-stepping scheme, based on the standard semi-variational formulation. We used an implicit Euler scheme in time with $\Delta t = 2^{-14}$ and B-Splines of order $2$ and level $j_{\text{space}} = 11$ for spanning $V_h$ in space.
	\item[(c)] \textbf{Nonsmooth case}: The function $u_0(x) = 2 \cdot \mathds{1}_{[0.25,0.75]}(x) \in L_2(0,1)$ is discontinuous with $\Vert u_0 \Vert_{H} = 1$. Again, we compute a reference solution by time-stepping. Here, we use the implicit Euler scheme with $\Delta t = 2^{-14}$ and B-Splines of order $3$ and level $j_{\text{space}} = 11$ in space.
\end{compactitem}

\subsection{Approximation}
As we know that the regularity of the solution is determined by the smoothness of the initial condition, we expect the rate of approximation to depend on this. First, we note, that the \emph{asymptotic} rate of convergence does not depend on the relation of $j_{\text{time}}$ and $j_{\text{space}}$ as can be seen in Figure \ref{L2error-nonsmooth-disc}, where we show the error over the dimension of $\U_\delta$ for the three different choices. Hence, we restrict ourselves in the sequel to the case $j_{\text{time}}=j_{\text{space}}+1$.
\begin{figure}[htb]
	\begin{center}
	\begin{tikzpicture}
	\begin{loglogaxis}[xlabel = {$\cN_\delta$}, ylabel = {$L_2$ - Error},  legend cell align={left}, grid=both,width=0.95\textwidth,height=6cm,legend pos=south west,xmin=20,xmax=10000]
		\addplot[mark=none, green, line width=1pt] table [x=NDOFS, y=L2ERROR ] {results/results_L2_levTime_minus1.txt};
		\addlegendentry{$j_{\text{time}} = j_{\text{space}} - 1$}
		\addplot[mark=none, red, line width=1pt] table [x=NDOFS, y=L2ERROR,] {results/results_L2_levTime_plus0.txt};
		\addlegendentry{$j_{\text{time}} = j_{\text{space}}$}
		\addplot[mark=none, blue, line width=1pt] table [x=NDOFS, y=L2ERROR] {results/results_L2_levTime_plus1.txt};
		\addlegendentry{$j_{\text{time}} = j_{\text{space}} + 1$}
	\end{loglogaxis}
	\end{tikzpicture} 
	\caption{$L_2$-error over $\cN_\delta$ for different discretizations, nonsmooth case (c).\label{L2error-nonsmooth-disc}}
	\end{center}
\end{figure}
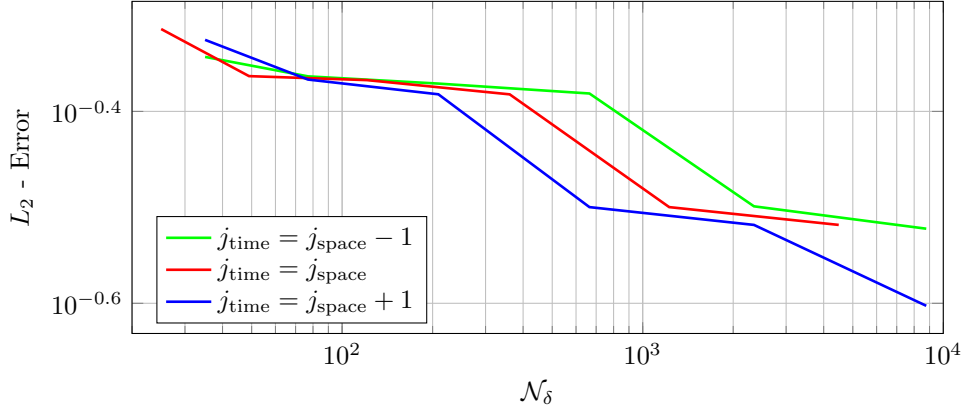

In Figure \ref{Error-comparision} (solid lines) we compare the rate of approximation for the three cases of initial condition. We clearly see the dependency and also observe that a sufficiently high regularity of the test (and hence also of the trial) functions yields the optimal rate (here linear since the trial functions are piecewise linears) of convergence. On the other hand, we also see that the ultra-weak discretization also converges in the minimal-regularity case (c), which is a challenge for time-stepping schemes w.r.t.\ both sufficient resolution and CFL-type stability.

\subsection{Norm preservation}
Finally, we investigate the (approximate) norm pre\-servation of our ultra-weak discretizations. First, similar to Figure \ref{L2error-nonsmooth-disc}, we report that we did not find any dependency on the relation of $j_{\text{space}}$ and $j_{\text{time}}$, so that we show results again only for $j_{\text{time}}=j_{\text{space}}+1$. We monitor the deviation from norm preservation over the dimension $\cN_\delta$ of trial and test space in Figure \ref{Error-comparision} (dashed lines). In order to see the relation to the discretization error, we plot both lines (error and norm preservation) in one figure. As we see, the slopes of both lines coincide for the smooth case and are slightly faster for the Sobolev and nonsmooth cases. In any case, the error w.r.t.\ norm preservation is in the range of the discretization accuracy, which seems quite reasonable.

\begin{figure}
	\begin{center}
	\begin{tikzpicture}[scale=0.99]
	\begin{loglogaxis}[xlabel = {$\cN_\delta$}, ylabel = {Errors},  legend cell align={left}, grid=both,legend pos=south west, legend style={font=\tiny},width=0.95\textwidth,height=7cm,xmin=30,xmax=10000]
		\addplot[mark=none, blue, line width=1pt] table [x=NDOFS, y=L2ERROR] {results/results_smooth_levTime_plus1.txt};
		\addlegendentry{(a) smooth, error}
		\addplot[dashed,mark=none, blue, line width=1pt] table [x=NDOFS, y=L2DIFF] {results/results_smooth_levTime_plus1.txt};
		\addlegendentry{(a) smooth, $d_\delta(T)$}
		\addplot[mark=none, red, line width=1pt] table [x=NDOFS, y=L2ERROR] {results/results_H1_levTime_plus1.txt};
		\addlegendentry{(b) Sobolev, error}
		\addplot[dashed,mark=none, red, line width=1pt] table [x=NDOFS, y=L2DIFF] {results/results_H1_levTime_plus1.txt};
		\addlegendentry{(b) Sobolev, $d_\delta(T)$}
		\addplot[mark=none, green, line width=1pt] table [x=NDOFS, y=L2ERROR] {results/results_L2_levTime_plus1.txt};
		\addlegendentry{(c) nonsmooth, error}
		\addplot[dashed,mark=none, green, line width=1pt] table [x=NDOFS, y=L2DIFF] {results/results_L2_levTime_plus1.txt};
		\addlegendentry{(c) nonsmooth, $d_\delta(T)$}
	\end{loglogaxis}
	\end{tikzpicture} 
	\caption{$\| u - u_{\delta}\|_{L_2(I;\Omega)}$ (solid, \enquote{error}) and $d_\delta(T)=\big\vert \Vert u_{\delta}(T) \Vert_H - \Vert u_{\delta}(0) \Vert_H \big\vert$ (dashed, \enquote{norm preservation})  over number of unknowns for different regularity of the solution, $j_{\text{time}}=j_{\text{space}}+1$.\label{Error-comparision}}
	\end{center}
\end{figure}
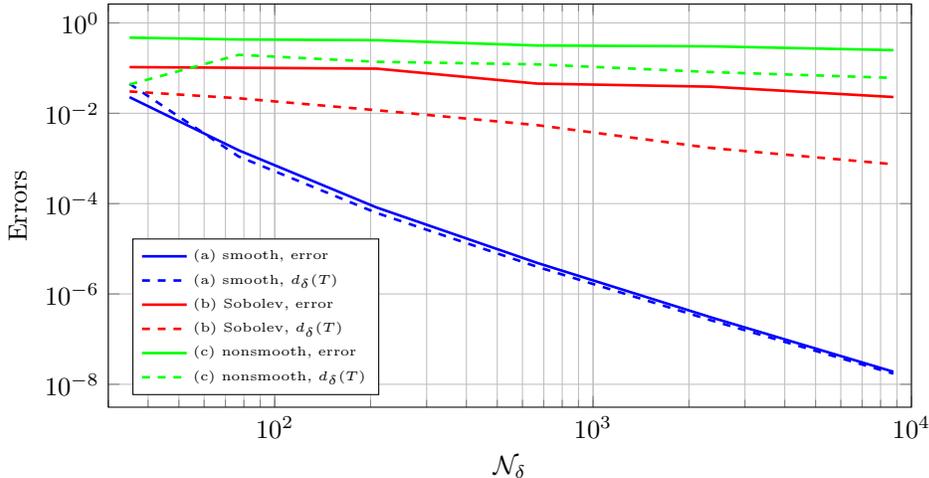

\section{Conclusions and Outlook}
\label{Sec:6}
In this paper we introduced a well-posed ultra-weak space-time variational formulation for the time-dependent linear Schrödinger equation and a corresponding optimally stable Petrov-Galerkin discretization. As a consequence, there is no need to satisfy other stability criteria like a CFL condition. On the other hand, however, this specific Petrov-Galerkin discretization only allows for \emph{asymptotic} norm-preservation in the same order as the discretization. As an alternative, we introduce a norm-preserving Galerkin scheme, loosing optimal stability though.

We view the results reported here as a starting point in several possible directions. Since we did not realize efficient numerical solvers adapted to the tensorproduct structure of the discretization, this seems to be a natural task following the ideas reported in \cite{henning2021weak}.

Next, as already said in the introduction, we followed the path of an ultra-weak variational form due to our interest in model reduction using the Reduced Basis Method, \cite{Haasdonk:RB,QuarteroniRB,RozzaRB}. In that framework, one could consider a time- and space-dependent potential $\mathcal{V}$, which one could be interested to be controlled in real-time, such that a desired state $u_D^*$ is achieved at final time $T$. 
This results in a bilinear optimal control problem, which can be solved e.g.\ by considering a parameterized optimization problem using the so called Control-To-State operator. An alternative could be the derivation of first order necessary conditions similar to \cite{BRU22}. In both cases, the potential $\mathcal{V}$ and the initial state $u_0$ can be seen as parameters in 
an infinite dimensional parameter space. The shown error/residual identity might be an important ingredient. There are many interesting open questions in that direction.

\bibliographystyle{ieeetr}
\bibliography{bibliography}
\end{document}